\renewcommand{\epsilon}{{\varepsilon}}
\numberwithin{equation}{section}
\newtheorem{theorem}{Theorem}[section]
\newtheorem{lemma}[theorem]{Lemma}
\newtheorem{remark}[theorem]{Remark}
\newtheorem{definition}[theorem]{Definition}
\newtheorem{proposition}[theorem]{Proposition}
\title[Logarithmic NLS equation with harmonic potential]
{Logarithmic Bose-Einstein condensates \\ with harmonic potential}
\author[A.H. Ardila]{Alex H. Ardila}
\author[L. Cely]{Liliana Cely}
\author[M. Squassina]{Marco Squassina}
\address[A.H. Ardila]{Instituto Nacional de Matem\'atica Pura e Aplicada - IMPA
	\newline\indent
Estrada Dona Castorina, 110 CEP 22460-320
	\newline\indent
Rio de Janeiro, RJ - Brasil}
\email{ardila@impa.br}
\address[L. Cely]{Instituto de Matemática e Estatística \newline\indent
Departamento de Matemática, Universidade de São Paulo,	05508-090 \newline\indent
São Paulo, SP, Brazil	}
\email{mlcelyp@ime.usp.br}
\address[M.\ Squassina]{Dipartimento di Matematica e Fisica \newline\indent
	Universit\`a Cattolica del Sacro Cuore \newline\indent
	Via dei Musei 41, I-25121 Brescia, Italy}
\email{marco.squassina@unicatt.it}
\subjclass[2010]{35Q55; 35Q40}
\keywords{Logarithmic Schr\"{o}dinger equation;   harmonic potential; stability}
\thanks{The first author was partially supported by CNPq/Brazil, through grant No. 152672/2016-8. 
The third author is member
of {\em Gruppo Nazionale per l'Analisi Ma\-te\-ma\-ti\-ca, la Probabilit\`a e le loro Applicazioni} (GNAMPA)}
\begin{document}

\begin{abstract}
In this paper, by using a compactness method, we study the Cauchy problem of the logarithmic Schr\"{o}dinger equation with harmonic potential. 
We then address the existence of ground states solutions as minimizers of the action on the Nehari manifold. Finally, we explicitly compute ground states (Gausson-type solution) and we show their orbital stability.
\end{abstract}

\maketitle

\begin{center}
	\begin{minipage}{8.5cm}
		\small
		\tableofcontents
	\end{minipage}
\end{center}

\medskip

\section{Introduction}
\label{S:0}
Recently, Zloshchastiev \cite{ADL} introduced a new Bose-Einstein condensate in a harmonic trap as a candidate structure of physical vacuum, this structure is described by a logarithmic nonlinear Schr\"odinger equation in presence of a harmonic potential.  The main motivation of such condensates lies essentially in their important applications  in quantum mechanics,  nuclear physics, quantum optics. 
Extensive details of the physical problem related to logarithmic Bose-Einstein condensate, experimental data and previous numerical studies can be found in \cite{BBA} and the references therein.

The aim of this work is the study of the existence and stability of the ground states associated with the following nonlinear Schr\"{o}dinger equation 
\begin{equation}
\label{0NL}
  i\partial_{t}u+\Delta u -V(x)u+\,u\,\mathrm{Log}\,\left|u\right|^{2}=0, \quad (x,t)\in \mathbb{R}^{N}\times\mathbb{R},
\end{equation}
where $t$ is time, $x\in \mathbb{R}^{N}$ is the spatial coordinate ($N\geq1$) and $u := u(x; t)\in \mathbb{C}$  is the  wave function. 
The local term $u\,\mathrm{Log}\,\left|u\right|^{2}$ describes the short-range interaction forces between particles. The potential $V (x)$   describes an electromagnetic field and has the following harmonic confinement
\begin{equation*} 
V(x)=\gamma(\gamma-1)|x|^{2}, \quad \gamma>1.
\end{equation*}

In the absence of the  harmonic potential, Eq. \eqref{0NL} now arises from different applications in quantum mechanics,  nuclear physics \cite{HE}, open quantum systems and  Bose-Einstein condensation.  We refer the readers to \cite{ADL, APLES, BBA, MFG}  for more information on the related physical backgrounds.  The classical logarithmic NLS equation  was proposed by Bialynicki-Birula and Mycielski \cite{CAS} as a model of nonlinear wave mechanics.

To the best of our knowledge, existence and stability of the ground states of logarithmic NLS equation \eqref{0NL} in presence of a harmonic potential has not been studied in the literature. More precisely,  Eq. \eqref{0NL} has been previously studied only for $\gamma=1$ (without the term $V (x)$). Among such works, let us mention \cite{ArdSqu,CB,TA,CL, CALO, PHBJ, PHST, AHA1}. This type of equations have been of great interest to both the theoretical and applied literature in recent years, see \cite{CZ, AHAA}.

Concerning the Schr\"{o}dinger equation with power-type nonlinearities and harmonic potential,  many authors have been studying the
problem of existence and stability of standing waves, see for instance \cite{JZ2000, Carles2002, FUKUIOHA2003, Fukui2000,FOG,JZ2SS005} and the references therein.

The many-dimensional harmonic oscillator $-\Delta+\gamma(\gamma-1)|x|^{2}$ is a self-adjoint operator on $L^{2}(\mathbb{R}^{N})$ with  operator domain $\left\{u\in H^{2}(\mathbb{R}^{N}): |x|^{2}u\in L^{2}(\mathbb{R}^{N})\right\}$ and quadratic form domain 
\begin{equation*} 
\Sigma(\mathbb{R}^{N})=\left\{u\in H^{1}(\mathbb{R}^{N}):|x|^{}u\in L^{2}(\mathbb{R}^{N}) \right\}.
\end{equation*}
It is well known that $\Sigma(\mathbb{R}^{N})$ is a Hilbert space when is equipped with the norm 
\begin{equation*}
\|u\|^{2}_{\Sigma}=\int_{\mathbb{R}^{N}}\left(\left|\nabla u\right|^{2}+|x|^{2}|u|^{2}\right)dx,
\end{equation*}
and it is continuously embedded in $H^{1}(\mathbb{R}^{N})$ due to the Hardy inequality. Along the flow of \eqref{0NL}, we have the conservation of the $L^{2}$-norm and of the energy functional associated:
\begin{equation}\label{EEEE}
E(u)=\frac{1}{2}\int_{\mathbb{R}^{N}}\left|\nabla u\right|^{2}dx+\frac{\gamma(\gamma-1)}{2}\int_{\mathbb{R}^{N}}|x|^{2}|u|^{2}dx  -\frac{1}{2}\int_{\mathbb{R}^{N}}\left|u\right|^{2}\mbox{Log}\left|u\right|^{2}dx.
\end{equation}
Note that $E$ is the generating Hamiltonian of \eqref{0NL}. It is important to note that the logarithmic nonlinearity $z\rightarrow z\,\mathrm{Log}\,\left|z\right|^{2}$ is not locally Lipschitz continuous due to the singularity of the logarithm at the origin, in particular $E\notin C^{1}(H^{1}(\mathbb{R}^{N}))$. In Section \ref{S:1},  we will show that the energy  $E$ is well-defined and of class $C^1$ on the energy space  $\Sigma(\mathbb{R}^{N})$, which implies that if $u\in C(\mathbb{R}, \Sigma(\mathbb{R}^{N}))\cap C^{1}(\mathbb{R}, \Sigma^{\prime}(\mathbb{R}^{N}))$, then  Eq.\eqref{0NL} makes sense in the space $\Sigma^{\prime}(\mathbb{R}^{N})$. Here, $\Sigma^{\prime}(\mathbb{R}^{N})$ is the dual space of $\Sigma^{}(\mathbb{R}^{N})$.

We have the following result concerning the well-posedness of the Cauchy problem for \eqref{0NL} in the energy space $\Sigma(\mathbb{R}^{N})$. The proof is done in Section \ref{S:2}.

\begin{proposition}[Well posedness] 
	\label{PCS}
Assume that $\gamma>1$. Then the Cauchy problem for \eqref{0NL} is globally well posed in  the energy space $\Sigma(\mathbb{R}^{N})$, i.e for every $u_{0}\in \Sigma^{}(\mathbb{R}^{N})$, there is a unique global solution  $u\in C(\mathbb{R},\Sigma^{}(\mathbb{R}^{N}))\cap C^{1}(\mathbb{R}, \Sigma^{\prime}(\mathbb{R}^{N}))$  with $u(0)=u_{0}$. In addition, the conservation of energy and charge hold, that is 
\begin{equation*}
E(u(t))=E(u_{0})\quad  and \quad \left\|u(t)\right\|^{2}_{L^{2}}=\left\|u_{0}\right\|^{2}_{L^{2}},\quad  \text{for all $t\in \mathbb{R}$}.
\end{equation*}
\end{proposition}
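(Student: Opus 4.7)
\medskip

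The plan is to follow the compactness method of Cazenave--Haraux adapted to the harmonic trap setting, as in \cite{CB,CL,CALO,ArdSqu}. The core difficulty is that the map $z \mapsto z\,\mathrm{Log}\,|z|^2$ fails to be locally Lipschitz near the origin, so the usual contraction-mapping scheme in $\Sigma(\mathbb{R}^N)$ cannot be applied directly. The harmonic-oscillator semigroup $e^{-itH}$ with $H = -\Delta + \gamma(\gamma-1)|x|^2$, however, is unitary on both $L^2(\mathbb{R}^N)$ and $\Sigma(\mathbb{R}^N)$ (via Mehler's formula), so $\Sigma$ is a natural functional setting once the nonlinearity is tamed.

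First, I would regularize the nonlinearity. For $\epsilon > 0$, replace $u\,\mathrm{Log}\,|u|^2$ by a Lipschitz approximation, for instance $f_\epsilon(u) := u\,\mathrm{Log}\,(\epsilon + |u|^2) - \tfrac{|u|^2 u}{\epsilon + |u|^2}\cdot\chi$ or more simply a truncation that is Lipschitz on $\mathbb{C}$, bounded below by the logarithmic Sobolev inequality in the energy functional, and converges pointwise to the original nonlinearity as $\epsilon \to 0^+$. For the regularized Cauchy problem, Duhamel's formula together with the unitarity of $e^{-itH}$ on $\Sigma$ and a standard Banach fixed point argument in $C([-T,T],\Sigma(\mathbb{R}^N))$ produces a unique solution $u^\epsilon \in C(\mathbb{R},\Sigma(\mathbb{R}^N))\cap C^1(\mathbb{R},\Sigma'(\mathbb{R}^N))$ for which conservation of the $L^2$ norm and of the corresponding regularized energy hold.

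Next, to pass to the limit, I would derive bounds on $u^\epsilon$ that are uniform in $\epsilon$. The mass is trivially conserved. Using the logarithmic Sobolev inequality together with the harmonic potential term $\gamma(\gamma-1)\||x|u\|_{L^2}^2$, the (regularized) energy controls $\|u^\epsilon(t)\|_\Sigma$ uniformly in $t$ and in $\epsilon$, since the negative contribution of $-\int |u|^2 \mathrm{Log}\,|u|^2$ can be absorbed by a fraction of $\|\nabla u\|_{L^2}^2$ plus a term controlled by $\|u\|_{L^2}$. From the equation and these uniform bounds, $\partial_t u^\epsilon$ is equibounded in a suitable negative-order space (say $L^\infty(I,\Sigma'(\mathbb{R}^N))$ or $L^\infty(I,H^{-s})$), so by an Ascoli-Arzel\`a / Banach--Alaoglu argument on bounded time intervals one extracts a subsequence converging weakly-$\star$ in $L^\infty(I,\Sigma)$ and strongly in $C(I,L^2_{\mathrm{loc}})$; continuity of $f_\epsilon$ together with the harmonic confinement (which prevents mass from escaping to infinity, hence promotes local strong convergence to strong $L^2$ convergence) allows identification of the limit as a solution of \eqref{0NL} with $u(0)=u_0$, and lower semicontinuity gives $u\in L^\infty_{\mathrm{loc}}(\mathbb{R},\Sigma)$.

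The key step I expect to be delicate is uniqueness together with the continuity $u\in C(\mathbb{R},\Sigma(\mathbb{R}^N))$. Uniqueness cannot rely on a Lipschitz estimate for the nonlinearity; instead, one exploits the classical monotonicity property
\begin{equation*}
\bigl|\mathrm{Im}\bigl[\bigl(z_1\,\mathrm{Log}\,|z_1|^2 - z_2\,\mathrm{Log}\,|z_2|^2\bigr)(\overline{z_1-z_2})\bigr]\bigr| \leq C|z_1-z_2|^2,\qquad z_1,z_2\in\mathbb{C},
\end{equation*}
which allows one to test the difference of two solutions against itself, cancel the linear terms (including the harmonic potential, which is real), and close a Gronwall inequality on $\|u(t)-v(t)\|_{L^2}^2$. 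Continuity of $u$ in the strong $\Sigma$-topology then follows from the standard argument that a weak-$\star$ continuous curve with conserved energy and mass must be strongly continuous, since $\|u(t)\|_\Sigma^2$ is recovered from $E(u(t))$, $\|u(t)\|_{L^2}^2$ and $\int |u(t)|^2 \mathrm{Log}\,|u(t)|^2 dx$, and the logarithmic integral is continuous under strong $L^2\cap\Sigma$-weak convergence. Finally, global existence is automatic: the a priori bound on $\|u(t)\|_\Sigma$ derived from conservation of mass and energy is valid for all $t$, so the solution does not blow up and extends to $\mathbb{R}$.
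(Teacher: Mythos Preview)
Your proposal is correct and follows essentially the same compactness scheme as the paper: regularize the logarithmic nonlinearity by a globally Lipschitz approximation, solve the regularized problem globally in $\Sigma(\mathbb{R}^N)$ with conserved mass and (regularized) energy, obtain $\epsilon$-uniform $\Sigma$-bounds, extract a weak limit via Ascoli--Arzel\`a/Banach--Alaoglu and pass to the limit in the weak formulation, then prove $L^2$-uniqueness via the monotonicity inequality for $z\mapsto z\,\mathrm{Log}|z|^2$ and upgrade to $C(\mathbb{R},\Sigma)$ from conservation. The only notable difference is cosmetic: the paper uses the explicit Cazenave splitting $F=B-A$ and regularizes by truncating $A(|z|)/|z|^2$ and $B(|z|)/|z|^2$ at levels $1/m$ and $m$ respectively (so that $g_m=b_m-a_m$ is globally Lipschitz and one can invoke \cite[Theorem~9.2.6]{CB} directly), whereas your $f_\epsilon$ is left somewhat schematic; in practice the $A,B$-based truncation is cleaner because it separates the small-$|u|$ singularity from the large-$|u|$ growth and makes the uniform energy bound and the passage to the limit in the nonlinear term transparent.
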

The most important issue in view of the applications of \eqref{0NL} in atomic physics and
quantum optics seems to be the study of standing waves solutions of \eqref{0NL}. In this case they are solutions  of \eqref{0NL} of the form $u(x,t)=e^{i\omega t}\varphi(x)$, where $\omega\in \mathbb{R}$ and $\varphi$ is a real valued function which has to solve the following nonlinear scalar field equation
\begin{equation}\label{EP}
-\Delta \varphi+\omega \varphi+\gamma(\gamma-1)|x|^{2}\varphi-\varphi\, \mathrm{Log}\left|\varphi \right|^{2}=0, \quad x\in {\mathbb{R}}^{N}.
\end{equation}

Before stating our results, we introduce some notations to be used throughout the paper. For $\omega\in \mathbb{R}$ and $\gamma>0$, we define the following functionals of class $C^{1}$ on $\Sigma^{}(\mathbb{R}^{N})$:
\begin{align*}
 S_{\omega}(u)&:=\frac{1}{2}\|\nabla u\|_{L^{2}}^{2}+\frac{\gamma(\gamma-1)}{2}\|xu\|_{L^{2}}^{2} +\frac{\omega+1}{2}\|u\|_{L^{2}}^{2}-\frac{1}{2}\int_{\mathbb{R}^{N}}\left|u\right|^{2}\mbox{Log}\left|u\right|^{2}dx,\\
  I_{\omega}(u)&:=\|\nabla u\|_{L^{2}}^{2}+\gamma(\gamma-1)\|xu\|_{L^{2}}^{2}+\omega\|u\|_{L^{2}}^{2}-\int_{\mathbb{R}^{N}}\left|u\right|^{2}\mbox{Log}\left|u\right|^{2}dx.
\end{align*}

Note the scalar field equation \eqref{EP} is varational in natura, that is, any solution is a critical point of $S_{\omega}(u)$.
It is not difficult to show that $I_{\omega}(u)=\left\langle S_{\omega}^{\prime}(u),u\right\rangle$.

For Eq. \eqref{0NL}, the ground state solution play a crucial role in the dynamics. We recall that a nontrivial solution $\varphi\in \Sigma^{}(\mathbb{R}^{N})$ of \eqref{EP} is termed as a ground state if it has some minimal action among all solutions of the nonlinear scalar field equation \eqref{EP}. In particular, it is possible to prove existence of ground state solutions solving the constrained variational problem
\begin{align}
\begin{split}\label{MPE}
d(\omega)&={\inf}\left\{S_{\omega}(u):\, u\in \Sigma^{}(\mathbb{R}^{N}) \setminus  \left\{0 \right\},  I_{\omega}(u)=0\right\} \\ 
&=\frac{1}{2}\,{\inf}\left\{\left\|u\right\|_{L^{2}}^{2}:u\in  \Sigma^{}(\mathbb{R}^{N})\setminus \left\{0 \right\},  I_{\omega}(u)= 0 \right\}. \end{split}
\end{align}
We define the set of ground states by
\begin{equation*}
 \mathcal{G}_{\omega}=\bigl\{ \varphi\in \Sigma^{}(\mathbb{R}^{N}) \setminus  \left\{0 \right\}: S_{\omega}(\varphi)=d(\omega), \,\, I_{\omega}(\varphi)=0\bigl\}.
\end{equation*}
In Section \ref{S:3}, we show that the quantity $d(\omega)$ is positive for every $\omega\in \mathbb{R}$.
Indeed, for all $\gamma>1$, $N\in \mathbb{N}$ and $\omega\in \mathbb{R}$ one has that
\begin{equation*}
d(\omega)=\frac{1}{2}{{\pi}^{\frac{N}{2}}{\gamma}^{-\frac{N}{2}}}e^{\omega+\gamma N}. 
\end{equation*}
Moreover, we have that any minimizing sequence is compact,  the minimum is achieved and we explicitly compute the ground states of  \eqref{EP}.
More precisely,

\begin{theorem}[Ground states] 
	\label{ESSW}
Let $N\geq 1$, $\gamma>1$ and $\omega\in \mathbb{R}$. Then we have\\
{\rm (i)} Any minimizing sequence of $d(\omega)$ is relativity compact in $\Sigma^{}(\mathbb{R}^{N})$. That is, if a sequence  $\left\{ u_{n}\right\}\subseteq \Sigma^{}(\mathbb{R}^{N})$ is such that $I_{\omega}(u_{n})=0$ and $S_{\omega}(u_{n})\rightarrow d(\omega)$ as $n$ goes to $+\infty$, then 
up to a subsequence there exist $\varphi\in\Sigma^{}(\mathbb{R}^{N}) $ satisfying  $S_{\omega}(\varphi)=d(\omega)$ and $u_{n}\rightarrow \varphi$ in $\Sigma^{}(\mathbb{R}^{N})$.\\
{\rm (ii)} The set of ground states is given by $\mathcal{G}_{\omega}=\left\{e^{i\theta}\phi_{\omega}: \, \theta\in\mathbb{R}\right\}$, where
\begin{equation}\label{IGS1}
\phi_{\omega}(x):=e^{\frac{\omega+\gamma N}{2}}e^{-\frac{\gamma}{2}\left|x\right|^{2}}, \quad x\in {\mathbb{R}}^{N}.
\end{equation}
\end{theorem}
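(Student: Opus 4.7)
For part (i), the plan is as follows. Since $I_\omega(u_n)=0$ and $S_\omega(u)-\tfrac{1}{2}I_\omega(u)=\tfrac{1}{2}\|u\|_{L^2}^2$, this identity gives $\|u_n\|_{L^2}^2\to 2d(\omega)$. A standard logarithmic bound of the form $|s^{2}\log s^{2}|\leq C(s^{2+\delta}+s^{2-\delta})$ combined with Sobolev inequalities yields $|\int|u|^{2}\log|u|^{2}|\leq\varepsilon\|\nabla u\|_{L^2}^{2}+C(\|u\|_{L^2})$, which once inserted into $I_\omega(u_n)=0$ (using $\gamma>1$) controls $\|\nabla u_n\|_{L^2}$ and $\|xu_n\|_{L^2}$; hence $\{u_n\}$ is bounded in $\Sigma$. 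The compact embedding $\Sigma\hookrightarrow L^{p}(\mathbb{R}^{N})$ for $p\in[2,2^{*})$, afforded by the harmonic confinement, produces a subsequence $u_n\rightharpoonup\varphi$ in $\Sigma$ with $u_n\to\varphi$ strongly in every such $L^{p}$; in particular $\|\varphi\|_{L^2}^{2}=2d(\omega)>0$, so $\varphi\neq 0$. Weak lower semicontinuity of the quadratic terms and continuity of the logarithmic term along the strong $L^{p}$-convergence then give $I_\omega(\varphi)\leq 0$. If the inequality were strict, the scaling identity
\[
I_\omega(\mu\varphi)=\mu^{2}\bigl(I_\omega(\varphi)-2(\log\mu)\,\|\varphi\|_{L^2}^{2}\bigr)
\]
produces $\mu\in(0,1)$ with $I_\omega(\mu\varphi)=0$ and $\|\mu\varphi\|_{L^2}^{2}<2d(\omega)$, contradicting the definition of $d(\omega)$. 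Hence $\varphi$ is a minimizer. Strong $\Sigma$-convergence then follows from $\|u_n\|_{L^2}\to\|\varphi\|_{L^2}$ together with the convergence of $\|\nabla u_n\|_{L^2}^{2}+\gamma(\gamma-1)\|xu_n\|_{L^2}^{2}$ (deduced from $I_\omega(u_n)=I_\omega(\varphi)=0$ and continuity of the log term) combined with lower semicontinuity of each summand separately.

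For part (ii), a direct computation using $\nabla\phi_\omega=-\gamma x\phi_\omega$ and $\log|\phi_\omega|^{2}=\omega+\gamma N-\gamma|x|^{2}$ shows that $\phi_\omega$ solves \eqref{EP}, so $I_\omega(\phi_\omega)=0$ and $\phi_\omega$ is admissible. To identify the minimum value and the full set of minimizers, I would use the Gaussian substitution $u=\phi_\omega v$; after an integration by parts based on $\nabla\phi_\omega^{2}=-2\gamma x\phi_\omega^{2}$, the constraint collapses to
\[
I_\omega(u)=\int_{\mathbb{R}^{N}}\phi_\omega^{2}\bigl(|\nabla v|^{2}-|v|^{2}\log|v|^{2}\bigr)\,dx.
\]
Setting $d\mu=\|\phi_\omega\|_{L^2}^{-2}\phi_\omega^{2}\,dx$, a centered Gaussian probability measure with covariance $(2\gamma)^{-1}I$, and $A:=\int|v|^{2}\,d\mu$, one has $\|u\|_{L^2}^{2}=\|\phi_\omega\|_{L^2}^{2}\,A$. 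The constraint reads $\int|\nabla v|^{2}\,d\mu=\int|v|^{2}\log|v|^{2}\,d\mu$, while Gross's logarithmic Sobolev inequality for $d\mu$ (applied to $|v|$ and combined with the diamagnetic inequality) gives
\[
\int|v|^{2}\log|v|^{2}\,d\mu-A\log A\leq\tfrac{1}{\gamma}\int|\nabla v|^{2}\,d\mu.
\]
Combining these yields $\tfrac{\gamma-1}{\gamma}\int|\nabla v|^{2}\,d\mu\leq A\log A$; since $\gamma>1$ and the left-hand side is nonnegative, $A\geq 1$ and thus $\|u\|_{L^2}^{2}\geq\|\phi_\omega\|_{L^2}^{2}$. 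This forces $d(\omega)=\tfrac{1}{2}\|\phi_\omega\|_{L^2}^{2}=\tfrac{1}{2}(\pi/\gamma)^{N/2}e^{\omega+\gamma N}$ and $\phi_\omega\in\mathcal{G}_\omega$. Equality $A=1$ simultaneously forces $\int|\nabla v|^{2}\,d\mu=0$, so $v$ is a constant of modulus one and $u=e^{i\theta}\phi_\omega$.

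The main obstacle I anticipate is establishing the continuity of the map $u\mapsto\int|u|^{2}\log|u|^{2}$ under the weak $\Sigma$-convergence used in Part (i). Because $s\mapsto s\log s$ is neither Lipschitz nor sign-definite near the origin, this requires splitting the integral over $\{|u_n|\leq 1\}$ and $\{|u_n|>1\}$ and handling each piece via the pointwise estimate $|s^{2}\log s^{2}|\leq C(s^{2-\delta}+s^{2+\delta})$, together with the strong $L^{p}$-compactness (for $p$ slightly above and slightly below $2$) provided by the harmonic confinement. Once this continuity is secured, the scaling trick in Part (i) and the reduction to Gross's inequality in Part (ii) proceed without additional difficulty.
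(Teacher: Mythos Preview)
Your proposal is correct, and Part~(ii) follows a genuinely different route from the paper.

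For Part~(i) your argument is essentially the paper's, with one simplification: you establish $I_\omega(\varphi)\le 0$ directly by combining weak lower semicontinuity of the quadratic part with \emph{continuity} of $u\mapsto\int|u|^2\log|u|^2$ along the extracted subsequence, and then rule out strict inequality by the scaling $\mu\varphi$. The paper instead argues both cases $I_\omega(\varphi)<0$ and $I_\omega(\varphi)>0$ separately, invoking a Brezis--Lieb type splitting (their Lemma~\ref{L4}) for the second case. Your route is shorter; the only point to make airtight is the compactness of $\Sigma\hookrightarrow L^{2-\delta}$ for small $\delta>0$, which follows by interpolating between the compact embedding $\Sigma\hookrightarrow L^{2}$ and the continuous embedding $\Sigma\hookrightarrow L^{2-1/N}$ (the paper's Lemma~\ref{POPO}). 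With that in hand, the bound $|s^{2}\log s^{2}|\le C(s^{2-\delta}+s^{2+\delta})$ and generalized dominated convergence give the required continuity.

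For Part~(ii) the two proofs diverge. The paper combines two separate inequalities---the Euclidean logarithmic Sobolev inequality (its Lemma~\ref{L1}) with $\alpha^{2}=\pi/\gamma$ and the harmonic oscillator bound $\|\nabla u\|_{L^2}^{2}+\gamma^{2}\|xu\|_{L^2}^{2}\ge\gamma N\|u\|_{L^2}^{2}$---to obtain $d(\omega)=\tfrac12\|\phi_\omega\|_{L^2}^{2}$; to identify the minimizers it then shows via Schwarz symmetrization that $|\varphi|$ must be radially decreasing, and only afterwards invokes the equality case of the Euclidean log-Sobolev inequality to pin down the Gaussian with $y=0$. Your substitution $u=\phi_\omega v$ collapses everything into a single application of Gross's Gaussian logarithmic Sobolev inequality for the measure $d\mu=\|\phi_\omega\|_{L^2}^{-2}\phi_\omega^{2}\,dx$; moreover, once $A=\int|v|^{2}\,d\mu=1$ you get $\tfrac{\gamma-1}{\gamma}\int|\nabla v|^{2}\,d\mu\le A\log A=0$, so $\nabla v\equiv 0$ and $v$ is a unimodular constant. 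This bypasses symmetrization entirely and never needs the explicit equality case of log-Sobolev. The paper's approach has the advantage of staying within the Euclidean framework used elsewhere in the article; yours is more self-contained and makes the rigidity transparent. One small check worth recording is that $u\in\Sigma$ implies $v\in H^{1}(d\mu)$: indeed $\phi_\omega\nabla v=\nabla u+\gamma x\,u$, so $\int\phi_\omega^{2}|\nabla v|^{2}\le 2\|\nabla u\|_{L^2}^{2}+2\gamma^{2}\|xu\|_{L^2}^{2}<\infty$.
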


\begin{remark}\rm
	\label{RM}
Assume that the infimum of $d(\omega)$ is achieved by $u$.  Then, there exist a Lagrange multiplier  $\Lambda\in \mathbb{R}$ such that $S^{\prime}_{\omega}(u)=\Lambda I^{\prime}_{\omega}(u)$, which implies that $\left\langle S^{\prime}_{\omega}( u),u\right\rangle=\Lambda\left\langle  I^{\prime}_{\omega}(u),u\right\rangle$. Hence    
$$
\text{$\left\langle S^{\prime}_{\omega}( u),u\right\rangle =I_{\omega}(u)=0$
	\quad and \quad $\left\langle  I^{\prime}_{\omega}(u),u\right\rangle= -2\left\|u_{}\right\|_{L^{2}}^{2}<0$}, 
$$
implies $\Lambda=0$. Therefore,  $u$ is a weak solution to equation \eqref{EP}. On the other hand, for any $v\in \Sigma^{}(\mathbb{R}^{N})\setminus  \left\{0 \right\}$ satisfying $S^{\prime}_{\omega}(v)=0$, it follows that 
$I_{\omega}(v)=0$. Thus, by definition of $\mathcal{G}_{\omega}$, we get that $u$ has  minimal action among all solutions of \eqref{EP}.
\end{remark}

We now discuss the notion of  stability of standing waves.  The basic symmetry associated to equation \eqref{0NL} is the {\em phase-invariance} (while the translation invariance  does {\em not} hold due to the harmonic potential); taking this fact into account,  it is reasonable
to define orbital stability as follows:

\begin{definition}\label{2D111}
We say that  a standing wave solution $u(x,t)=e^{i\omega t}\varphi(x)$ of \eqref{0NL} is orbitally stable in $\Sigma^{}(\mathbb{R}^{N})$ if for any  $\epsilon>0$ there exist $\eta>0$  with the following property: if $u_{0}\in \Sigma^{}(\mathbb{R}^{N})$ satisfies $\left\|u_{0}-\varphi \right\|_{\Sigma^{}}<\eta$, then the solution $u(t)$ of  \eqref{0NL}  with $u(0)=u_{0}$ exist for all $t\in \mathbb R$ and satisfies 
\begin{equation*}
{\rm\sup\limits_{t\in \mathbb R}} {\rm\inf\limits_{\theta\in \mathbb{R}}} \|u(t)-e^{i\theta}\varphi \|_{\Sigma}<\epsilon.
\end{equation*}
Otherwise, the standing wave $e^{i\omega t}\varphi(x)$ is said to be  unstable in $\Sigma^{}(\mathbb{R}^{N})$.
\end{definition}

Our second  result shows that,  in terms of the Cazenave and Lions' argument,  the ground states are orbitally stable.
\begin{theorem}[Orbital stability] 
	\label{2ESSW}
For any $\omega \in \mathbb{R}$ and $N\geq1$, the standing wave $e^{i\omega t}\phi_{\omega}(x)$ is orbitally stable in  
$\Sigma(\mathbb{R}^{N})$.
\end{theorem}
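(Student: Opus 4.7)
The plan is to run the Cazenave--Lions contradiction argument, with the compactness of minimizing sequences established in Theorem \ref{ESSW} playing the role usually taken by concentration-compactness. Suppose orbital stability fails. Then there exist $\epsilon_0>0$, a sequence $u_{n,0}\to\phi_\omega$ in $\Sigma(\mathbb{R}^N)$, and times $t_n\in\mathbb{R}$ with
\[
\inf_{\theta\in\mathbb{R}}\|u_n(t_n)-e^{i\theta}\phi_\omega\|_{\Sigma}\geq \epsilon_0,
\]
where $u_n$ denotes the global solution provided by Proposition \ref{PCS}. Setting $v_n:=u_n(t_n)$, the goal is to reach a contradiction by proving that, along a subsequence, $v_n\to e^{i\theta}\phi_\omega$ strongly in $\Sigma$.

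First I would combine conservation of energy and mass with the $C^1$ regularity of $E$ on $\Sigma$ (noted in the introduction) and with the convergence $u_{n,0}\to\phi_\omega$ to obtain
\[
\|v_n\|_{L^2}^2\to \|\phi_\omega\|_{L^2}^2,\qquad E(v_n)\to E(\phi_\omega).
\]
A direct comparison of the two functionals yields the identity $I_\omega(u)=2E(u)+\omega\|u\|_{L^2}^2$, so $I_\omega(v_n)\to I_\omega(\phi_\omega)=0$. Thus $\{v_n\}$ almost lies in the Nehari manifold $\{I_\omega=0\}$.

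The next step is to push $v_n$ exactly onto the constraint by a multiplicative rescaling. A short computation gives $I_\omega(\lambda v_n)=\lambda^2\bigl(I_\omega(v_n)-\log(\lambda^2)\|v_n\|_{L^2}^2\bigr)$, so the unique $\lambda_n>0$ solving $I_\omega(\lambda_n v_n)=0$ is $\lambda_n=\exp\!\bigl(I_\omega(v_n)/(2\|v_n\|_{L^2}^2)\bigr)$, and $\lambda_n\to 1$ by the previous step. Setting $\tilde v_n:=\lambda_n v_n$ and exploiting the second characterization of $d(\omega)$ in \eqref{MPE},
\[
S_\omega(\tilde v_n)=\tfrac12\|\tilde v_n\|_{L^2}^2=\tfrac12\lambda_n^2\|v_n\|_{L^2}^2\longrightarrow \tfrac12\|\phi_\omega\|_{L^2}^2=d(\omega),
\]
so $\{\tilde v_n\}$ is a bona fide minimizing sequence for $d(\omega)$. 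Theorem \ref{ESSW}(i) then furnishes a subsequence converging in $\Sigma$ to some $\varphi\in\mathcal{G}_\omega$, and Theorem \ref{ESSW}(ii) identifies $\varphi=e^{i\theta}\phi_\omega$. Since $\lambda_n\to 1$, it follows that $v_n\to e^{i\theta}\phi_\omega$ in $\Sigma$, contradicting the assumed lower bound.

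The main obstacle is structural rather than analytic: one must show that mere conservation of $E$ and $\|\cdot\|_{L^2}^2$ is enough to force a minimizing sequence for the Nehari problem to appear. The point is that for this logarithmic nonlinearity $I_\omega$ is an \emph{affine} combination of $E$ and the mass, so the two conservation laws automatically pin $I_\omega(v_n)$ to zero in the limit; the explicit scaling $\lambda_n v_n$ then restores the constraint exactly, and all the compactness work is already encapsulated in Theorem \ref{ESSW}.
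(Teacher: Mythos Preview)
Your proof is correct and follows essentially the same Cazenave--Lions contradiction argument as the paper: both extract $v_n=u_n(t_n)$, use conservation to get $I_\omega(v_n)\to 0$ (the paper routes this through $S_\omega(v_n)\to d(\omega)$ and the identity $I_\omega=2S_\omega-\|\cdot\|_{L^2}^2$, you through $I_\omega=2E+\omega\|\cdot\|_{L^2}^2$, which is equivalent), then apply the identical multiplicative correction $\lambda_n=\exp\bigl(I_\omega(v_n)/(2\|v_n\|_{L^2}^2)\bigr)$ to land on the Nehari manifold and invoke Theorem~\ref{ESSW}. The only point you leave implicit is that $\{v_n\}$ is bounded in $\Sigma$ (needed for $\|v_n-\tilde v_n\|_\Sigma\to 0$), but this follows at once from the convergence of $\tilde v_n$ together with $\lambda_n\to 1$.
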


The paper is organized in the following way: in Section \ref{S:1}, we show that the energy functional $E$ is of class $C^{1}$ on $\Sigma(\mathbb{R}^{N})$. Moreover, we recall several known results and introduce several notations. In Section \ref{S:2}, we give an idea of the proof of  Proposition \ref{PCS}.  In Section \ref{S:3} we prove, by variational techniques, the existence of a minimizer for $d(\omega)$ (Theorem \ref{ESSW}), while in Section  \ref{S:4} the proof of Theorem \ref{2ESSW} is completed.

\bigskip
\noindent
{\bf Notation.} The space $L^{2}(\mathbb{R}^{N},\mathbb{C})$, denoted by $L^{2}(\mathbb{R}^{N})$ for shorthand, is equipped with the
norm $\|\cdot\|_{L^{2}}$. Moreover $2^{\ast}$ is defined by  $2^{\ast}=2N/(N-2)$ if $N\geq3$, and $2^{\ast}=+\infty$ if $N=1,2$. Finally, 
$\left\langle \cdot , \cdot \right\rangle$ is the duality pairing between $X^{\prime}$ and $X$, where $X$ is a Banach space and $X^{\prime}$ is its dual.

\section{Preliminary  lemmas}  
\label{S:1}
In this section we recall several known results, almost all are proved in the paper \cite{CL}. Moreover, we show that the energy functional $E$ is of class $C^{1}$ on $\Sigma(\mathbb{R}^{N})$.

\begin{proposition} \label{DFFE}
The energy functional $E$ defined by \eqref{EEEE} is of class $C^{1}$ and  for $u\in \Sigma(\mathbb{R}^{N})$ the  Fr\'echet derivative of $E$ in $u$ exists and it is given by  
\begin{equation*}
E^{\prime}(u)=-\Delta u+\gamma(\gamma-1)|x|^{2}u-u\, \mathrm{Log}\left|u\right|^{2}-u. 
\end{equation*}
\end{proposition}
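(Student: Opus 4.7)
My plan is to split $E=E_1+E_2+E_3$, where
\[ E_1(u)=\tfrac{1}{2}\|\nabla u\|_{L^2}^2,\qquad E_2(u)=\tfrac{\gamma(\gamma-1)}{2}\|xu\|_{L^2}^2,\qquad E_3(u)=-\tfrac{1}{2}\!\int_{\mathbb{R}^N}|u|^2\,\mathrm{Log}\,|u|^2\,dx, \]
and to treat each piece separately. The sum $E_1+E_2$ equals $\tfrac12\|u\|_\Sigma^2$, a continuous quadratic form on $\Sigma(\mathbb{R}^N)$; it is thus $C^\infty$, with Fr\'echet derivative $-\Delta u+\gamma(\gamma-1)|x|^2u$ understood in $\Sigma'(\mathbb{R}^N)$ via the natural duality pairing $\langle -\Delta u+\gamma(\gamma-1)|x|^2u,v\rangle=\int_{\mathbb{R}^N}(\nabla u\cdot\overline{\nabla v}+\gamma(\gamma-1)|x|^2u\bar v)\,dx$, $v\in\Sigma(\mathbb{R}^N)$. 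The whole content of the proposition therefore reduces to showing that $E_3$ is of class $C^1$ on $\Sigma(\mathbb{R}^N)$ with derivative $u\mapsto -u\,\mathrm{Log}\,|u|^2-u$.

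To handle $E_3$, I would decompose the Nemytskii nonlinearity $g(z):=z\,\mathrm{Log}\,|z|^2$ into a singular near-zero part and a subcritical polynomial part at infinity. Fix $\epsilon>0$ small enough that $2+\epsilon<2^*$ and $1-\epsilon>0$, and write $g=g_1+g_2$ with $g_1(z):=g(z)\mathbf{1}_{\{|z|\le 1\}}$ and $g_2(z):=g(z)\mathbf{1}_{\{|z|>1\}}$. The elementary estimates $|s\,\mathrm{Log}\,s^2|\le C_\epsilon\, s^{1-\epsilon}$ for $s\in(0,1]$ and $|\mathrm{Log}\,s^2|\le C_\epsilon\, s^\epsilon$ for $s\ge 1$ give the pointwise bounds $|g_1(z)|\le C|z|^{1-\epsilon}$ and $|g_2(z)|\le C|z|^{1+\epsilon}$. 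Since $\Sigma(\mathbb{R}^N)\hookrightarrow H^1(\mathbb{R}^N)\hookrightarrow L^p(\mathbb{R}^N)$ for every $p\in[2,2^*)$, these bounds together with H\"older's inequality make $|u|^2\,\mathrm{Log}\,|u|^2\in L^1(\mathbb{R}^N)$ and make $v\mapsto\mathrm{Re}\!\int_{\mathbb{R}^N}g(u)\bar v\,dx$ a continuous linear functional on $\Sigma(\mathbb{R}^N)$; hence $E_3'(u):=-u\,\mathrm{Log}\,|u|^2-u$ is well defined as an element of $\Sigma'(\mathbb{R}^N)$.

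Continuity of the map $u\mapsto E_3'(u)$ and Fr\'echet differentiability of $E_3$ are then obtained by standard Nemytskii-type arguments applied to each piece of the split. For $g_2$, which is $C^1$ away from the origin, dominated convergence (with majorant coming from the $|z|^{1+\epsilon}$ bound) gives everything directly. For $g_1$, whose Lipschitz constant blows up at $z=0$, I would invoke Vitali's convergence theorem: along any sequence $u_n\to u$ in $\Sigma(\mathbb{R}^N)$ one extracts an a.e.\ convergent subsequence, and the sublinear bound $|g_1(z)|\le C|z|^{1-\epsilon}$ combined with the $L^p$-convergence from the Sobolev embedding supplies the uniform integrability needed to pass to the limit in the pairing. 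Finally, the Fr\'echet expansion $E_3(u+v)-E_3(u)-\mathrm{Re}\!\int_{\mathbb{R}^N}(-u\,\mathrm{Log}\,|u|^2-u)\bar v\,dx=o(\|v\|_\Sigma)$ follows from a pointwise mean-value bound on $g_2$ combined with the sublinear bound on $g_1$, integrated via the same H\"older/Sobolev scheme.

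The main obstacle is the failure of local Lipschitz continuity of $z\mapsto z\,\mathrm{Log}\,|z|^2$ at the origin, which is exactly the reason the analogous statement fails on $H^1(\mathbb{R}^N)$. The decomposition above -- sublinear near zero and subcritical polynomial at infinity, coupled with a Vitali-type argument on the singular piece -- is the standard device (cf.\ \cite{CL}) that removes this obstruction; the essential book-keeping is merely to ensure that every pointwise estimate is matched by an integrable majorant furnished by the embedding $\Sigma(\mathbb{R}^N)\hookrightarrow L^p(\mathbb{R}^N)$ coming from the harmonic confinement.
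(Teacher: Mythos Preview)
Your approach is correct in spirit and essentially parallels the paper's, but the execution differs in two respects worth noting.

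\emph{Difference in framework.} The paper does not attempt a direct Fr\'echet estimate. It uses Cazenave's Orlicz-space decomposition $F(|z|)=B(|z|)-A(|z|)$ from \eqref{IFD} and the space $L^A(\mathbb{R}^N)$: continuity of $E$ is obtained from Proposition~\ref{orlicz}(i), Lemma~\ref{POPO}(iii), and \eqref{DB}; G\^ateaux differentiability is checked by a pointwise limit (citing \cite{CL}); and then Lemma~\ref{APEX23} (continuity of $u\mapsto -\Delta u+\gamma(\gamma-1)|x|^2u-u\,\mathrm{Log}|u|^2$ from $\Sigma$ to $\Sigma'$) upgrades G\^ateaux to Fr\'echet. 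Your route---a bare cutoff $g=g_1+g_2$ at $|z|=1$ plus Vitali/dominated convergence---avoids the Orlicz machinery and is more elementary, at the cost of re-deriving by hand the Nemytskii continuity statements that the paper imports from \cite{CL}. Both decompositions encode the same phenomenon (sublinear near zero, subcritical growth at infinity); the paper's has the advantage that $A$ is convex and $C^1$, which is what makes the Orlicz norm behave well.

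\emph{One imprecision to fix.} In the middle paragraph you justify $|u|^2\,\mathrm{Log}\,|u|^2\in L^1$ and the continuity of $v\mapsto \int g(u)\bar v$ by invoking only $\Sigma\hookrightarrow H^1\hookrightarrow L^p$ for $p\in[2,2^*)$. That chain does not control the near-zero piece: the bound $|g_1(z)|\le C|z|^{1-\epsilon}$ (and the corresponding $|u|^{2-\epsilon}$ bound on the integrand) requires $u\in L^{2-\epsilon}$, which $H^1$ alone does not provide. You need $\Sigma\hookrightarrow L^{2-\delta}$ for some $\delta>0$, which is exactly Lemma~\ref{POPO}(ii) and genuinely uses the weight $|x|u\in L^2$. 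You acknowledge this at the end (``furnished by the embedding $\Sigma\hookrightarrow L^p$ coming from the harmonic confinement''), but the intermediate sentence should be corrected so that the argument is self-consistent.
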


\noindent
Before giving the proof of Proposition \ref{DFFE}, we fix some definitions that will be useful in the sequel. Define 
\begin{equation*}
F(z):=\left|z\right|^{2}\mbox{Log}\left|z\right|^{2},\quad  \text{for every  $z\in\mathbb{C}$},
\end{equation*}
and as in \cite{CL},  we introduce the functions  $A$, $B$ on $\left[0, \infty\right)$  by 
\begin{equation}\label{IFD}
A(s):=
\begin{cases}
-s^{2}\,\mbox{Log}(s^{2}), &\text{if $0\leq s\leq e^{-3}$;}\\
3s^{2}+4e^{-3}s^{}-e^{-6}, &\text{if $ s\geq e^{-3}$;}
\end{cases}
\quad  B(s):=F(s)+A(s).
\end{equation}
We will need the following  functions $a$ and $b$ given by
\begin{equation}\label{abapex}
a(z):=\frac{z}{|z|^{2}}\,A(\left|z\right|)\,\, \text{ and  }\,\, b(z):=\frac{z}{|z|^{2}}\,B(\left|z\right|),\quad \text{  for $z\in \mathbb{C}$, $z\neq 0$}.
\end{equation}
Noticing that for any $z\in\mathbb{C}$, $b(z)-a(z)=z\,\mathrm{Log}\left|z\right|^{2}$. In addition, we note that $A$ is a   nonnegative  convex and increasing function, and $A\in C^{1}\left([0,+\infty)\right)\cap C^{2}\left((0,+\infty)\right)$. We define the following Orlicz space $L^{A}(\mathbb{R}^{N})$ corresponding to $A$,
\begin{equation*}
L^{A}(\mathbb{R}^{N}):=\left\{u\in L^{1}_{\text{loc}}(\mathbb{R}^{N}) : A(\left|u\right|)\in L^{1}_{}(\mathbb{R}^{N})\right\}, 
\end{equation*}
equipped with the norm 
\begin{equation*}
\left\|u\right\|_{L^{A}}:={\inf}\left\{k>0: \int_{\mathbb{R}^{N}}A\left(k^{-1}{\left|u(x)\right|}\right)dx\leq 1 \right\}.
\end{equation*}
Here  $L^{1}_{\text{loc}}(\mathbb{R}^{N})$ is the space of all locally Lebesgue integrable functions. In  \cite[Lemma 2.1]{CL}, the author proved that  $\left(L^{A}(\mathbb{R}^{N}),\|\cdot\|_{L^{A}} \right)$ is a separable reflexive Banach space. 
Below we describe some properties of  $L^{A}(\mathbb{R}^{N})$. See \cite[Lemma 2.1]{CL} for more details.

\begin{proposition} \label{orlicz}
Assume that $\left\{u_{{m}}\right\}$ is a sequence  in  $L^{A}(\mathbb{R}^{N})$. Then the following facts hold:\\
	{\it i)} If  $u_{{m}}\rightarrow u$ in $L^{A}(\mathbb{R}^{N})$, then $A(\left|u_{{m}}\right|)\rightarrow A(\left|u\right|)$ in $L^{1}(\mathbb{R}^{N})$ as   $m\rightarrow \infty$.\\
	{\it ii)} Let  $u\in L^{A}(\mathbb{R}^{N})$. If  $u_{m}\rightarrow u$ $a.e.$ in $\mathbb{R}^{N}$ and if 
	\begin{equation*}
	\lim_{m \to \infty}\int_{\mathbb{R}^{N}}A\left(\left|u_{m}(x)\right|\right)dx=\int_{\mathbb{R}^{N}}A\left(\left|u(x)\right|\right)dx,
	\end{equation*}
	then $u_{{m}}\rightarrow u$ in $L^{A}(\mathbb{R}^{N})$ as $m\rightarrow \infty$.\\
	{\it iii)} For any function $u$ in $L^{A}(\mathbb{R}^{N})$,  we have the following relationship
\begin{equation}\label{DA1}
	{\rm min} \left\{\left\|u\right\|_{L^{A}},\left\|u\right\|^{2}_{L^{A}}\right\}\leq  \int_{\mathbb{R}^{N}}A\left(\left|u(x)\right|\right)dx\leq {\rm max} \left\{\left\|u\right\|_{L^{A}},\left\|u\right\|^{2}_{L^{A}}\right\}.
	\end{equation}
\end{proposition}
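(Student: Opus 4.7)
The plan is to build on standard Orlicz-space machinery, after first verifying that the specific function $A$ in \eqref{IFD} is an $N$-function satisfying the global $\Delta_{2}$-condition; once this is in place, items (i)--(iii) follow by classical arguments. First I would check directly that $A$ is continuous, nonnegative, strictly increasing, with $A(0)=0$, $A(s)/s\to 0$ as $s\to 0^{+}$ and $A(s)/s\to\infty$ as $s\to\infty$; the junction at $s=e^{-3}$ is $C^{1}$ (the two formulas agree at $6e^{-6}$ and both one-sided derivatives equal $10\,e^{-3}$), and $A''\ge 0$ on each piece gives global convexity. The $\Delta_{2}$-condition $A(2s)\le CA(s)$ follows on $[0,e^{-3}]$ from the identity $A(2s) = -4s^{2}\log(4s^{2}) = 4A(s) - 4s^{2}\log 4 \le 4A(s)$, and on $[e^{-3},\infty)$ directly from the quadratic form, so a uniform constant $C$ works on $[0,\infty)$.

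Next, for (iii), the $\Delta_{2}$-condition ensures continuity of the modular $\rho(u):=\int_{\mathbb{R}^{N}} A(|u|)\,dx$ and the equality $\rho(u/\lambda)=1$ with $\lambda:=\|u\|_{L^{A}}$ when $u\neq 0$. Using the pointwise scaling $A(\lambda s)\ge \lambda^{2}A(s)$ for $0<\lambda\le 1$ and the reverse for $\lambda\ge 1$ (both checked directly on each piece, since on the logarithmic piece the extra term $-2\lambda^{2}s^{2}\log\lambda$ has the favorable sign), substitution in $\rho(u/\lambda)=1$ and splitting into the cases $\lambda\le 1$ and $\lambda\ge 1$ yields \eqref{DA1}. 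For (i), assume $u_{m}\to u$ in $L^{A}(\mathbb{R}^{N})$; by (iii), $\rho(u_{m}-u)\to 0$, and convexity combined with $\Delta_{2}$ gives
\begin{equation*}
A(|u_{m}|)\le \tfrac{1}{2}\bigl(A(2|u_{m}-u|)+A(2|u|)\bigr)\le C\bigl(A(|u_{m}-u|)+A(|u|)\bigr).
\end{equation*}
After extracting an a.e.-convergent subsequence, a generalized dominated convergence theorem applied with the $L^{1}$-convergent majorants delivers $A(|u_{m}|)\to A(|u|)$ in $L^{1}(\mathbb{R}^{N})$. Finally, (ii) is proved by the dual inequality $A(|u_{m}-u|)\le C(A(|u_{m}|)+A(|u|))$: the right-hand side converges in $L^{1}$ to $2CA(|u|)$ by hypothesis, while the left-hand side vanishes a.e., so the same generalized dominated convergence argument forces $\rho(u_{m}-u)\to 0$, and applying (iii) concludes the norm convergence.

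The main obstacle is the generalized dominated convergence theorem with varying majorants; it requires $L^{1}$-convergence of the dominating sequence (automatic from the hypotheses) together with a mild uniform-integrability check, both of which are consequences of the $\Delta_{2}$-condition. The rest of the proof is essentially bookkeeping on the piecewise definition of $A$ and routine scaling estimates in the Luxemburg norm.
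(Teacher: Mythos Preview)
Your approach is correct and fills in precisely the details that the paper omits: the paper does not give its own proof of this proposition at all, but simply refers the reader to \cite[Lemma~2.1]{CL} for the properties of the Orlicz space $L^{A}(\mathbb{R}^{N})$. What you outline---verifying that the specific $A$ of \eqref{IFD} is a convex $N$-function satisfying a global $\Delta_{2}$-condition, and then appealing to standard modular/Luxemburg-norm arguments (generalized dominated convergence for (i) and (ii), the scaling relations for (iii))---is exactly the content of that reference, so your route and the paper's are the same in substance.

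One minor point worth tightening: in your proof of (iii) you invoke only the relation $A(\lambda s)\gtrless \lambda^{2}A(s)$ coming from the monotonicity of $s\mapsto A(s)/s^{2}$, which gives one of the two bounds in each case $\lambda\le 1$ or $\lambda\ge 1$. The companion bound (e.g.\ $\rho(u)\le \lambda$ when $\lambda=\|u\|_{L^{A}}\le 1$) comes instead from convexity and $A(0)=0$, which yields $A(\lambda s)\le \lambda A(s)$ for $0<\lambda\le 1$; you should state this explicitly. Similarly, in (ii) the hypothesis gives only convergence of $\int A(|u_{m}|)$, not $L^{1}$-convergence of $A(|u_{m}|)$; the latter follows from the a.e.\ convergence via Scheff\'e's lemma, which you are using implicitly.
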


\begin{remark}\rm
A simple calculation shows  that for all  $\epsilon>0$, there exists $C_{\epsilon}>0$ with 
$$
|B(z)-B(w)|\leq C_{\epsilon}(|z|^{1+\epsilon}+|w|^{1+\epsilon})|z-w|, 
\quad \text{for every $z,w\in \mathbb{C}$}.  
$$
hence, integrating  on $\mathbb{R}^{N}$ with $\epsilon=(2^{*}_{}-2)/2$ and applying H\"{o}lder's inequality and Sobolev's Inequalities we see that 
\begin{equation}\label{DB}
\int_{\mathbb{R}^{N}}\left|B(\left|u\right|)- B(\left|v\right|)\right|dx\leq C\left(1+ \left\|u\right\|^{2}_{H^{1}(\mathbb{R}^{N})}+ \left\|v\right\|^{2}_{H^{1}(\mathbb{R}^{N})} \right)\left\|u-v\right\|_{{L^{2}}}
\end{equation}
for all $u$, $v\in H^{1}(\mathbb{R}^{N})$. 
\end{remark}

\begin{lemma}[Injections] 
	\label{POPO}
Let $N\geq 1$. Then the following assertions hold.\\
{\rm (i)} The embedding $\Sigma(\mathbb{R}^{N})\hookrightarrow L^{q}(\mathbb{R}^{N})$ is compact, where $2\leq q< 2^{\ast}$.\\
{\rm (ii)} The inclusion map $\Sigma(\mathbb{R}^{N})\hookrightarrow L^{2-\delta}(\mathbb{R}^{N})$  is continuous, where $\delta={1}/{N}$.\\
{\rm (iii)} The inclusion map $\Sigma(\mathbb{R}^{N})\hookrightarrow L^{A}(\mathbb{R}^{N})$  is continuous.
\end{lemma}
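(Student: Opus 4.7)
My plan is to handle the three embeddings separately, all via classical Sobolev/Rellich ingredients combined with the weighted control $|x|u\in L^{2}$.

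For \emph{(i)} I would take a sequence $\{u_n\}$ bounded in $\Sigma(\mathbb{R}^{N})$ and, up to a subsequence, pass to a weak limit $u$. Rellich--Kondrachov yields $u_n\to u$ strongly in $L^{q}(B_R)$ for every ball $B_R$ and $2\le q<2^{\ast}$. Tightness at infinity in $L^{2}$ is immediate from
$$
\int_{|x|>R}|u_n-u|^{2}\,dx\;\le\;\frac{1}{R^{2}}\int_{\mathbb{R}^{N}}|x|^{2}|u_n-u|^{2}\,dx\;\le\;\frac{C}{R^{2}},
$$
which settles $q=2$. For $q\in(2,2^{\ast})$ I would interpolate the tail between $L^{2}$ (tight by the above) and $L^{2^{\ast}}$ (bounded via the Sobolev embedding of $H^{1}(\mathbb{R}^{N})\supset\Sigma(\mathbb{R}^{N})$).

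For \emph{(ii)} I split $\mathbb{R}^{N}=B_1\cup(\mathbb{R}^{N}\setminus B_1)$. On $B_1$, H\"older against $1$ gives $\int_{B_1}|u|^{2-\delta}\,dx\le |B_1|^{\delta/2}\,\|u\|_{L^{2}}^{2-\delta}$. On the exterior I would write $|u|^{2-\delta}=(|x||u|)^{2-\delta}|x|^{-(2-\delta)}$ and apply H\"older with conjugate exponents $2/(2-\delta)$ and $2/\delta$; the integrability condition $\int_{|x|>1}|x|^{-2(2-\delta)/\delta}\,dx<\infty$ becomes $2(2-\delta)/\delta>N$, which for $\delta=1/N$ reduces to $4N-2>N$, i.e.\ holds in every dimension $N\ge 1$. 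This produces the bound $\|u\|_{L^{2-\delta}}\le C\|u\|_{\Sigma}$.

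For \emph{(iii)} I would estimate $\int A(|u|)\,dx$ by $\|u\|_{\Sigma}$, exploiting the two-regime structure of $A$ in \eqref{IFD}. On $\{|u|>e^{-3}\}$ the polynomial branch gives $A(|u|)\le C|u|^{2}$ (since $|u|\le e^{3}|u|^{2}$ there). On $\{|u|\le e^{-3}\}$ one has $A(|u|)=-|u|^{2}\log|u|^{2}$; using that $s^{1/N}|\log s^{2}|$ is bounded on $(0,e^{-3}]$, we obtain $A(|u|)\le C|u|^{2-1/N}$. Summing,
$$
\int_{\mathbb{R}^{N}}A(|u|)\,dx\;\le\;C\bigl(\|u\|_{L^{2}}^{2}+\|u\|_{L^{2-1/N}}^{2-1/N}\bigr),
$$
which by (i)--(ii) is controlled by $\|u\|_{\Sigma}$. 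Proposition~\ref{orlicz}(iii) then translates this into a bound on $\|u\|_{L^{A}}$, and the sequential continuity of $\Sigma\hookrightarrow L^{A}$ follows by applying the same estimate to $u_n-u$ and invoking (i)--(ii) once more.

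No step is a genuine obstacle: the only delicate verification is the exponent count in \emph{(ii)}, confirming that $\delta=1/N$ is compatible with integrability of $|x|^{-2(2-\delta)/\delta}$ at infinity in every dimension; once this is in place, \emph{(iii)} is a mechanical consequence of (i)--(ii) together with the splitting of $A$ into its logarithmic and polynomial regimes.
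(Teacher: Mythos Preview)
Your argument is correct in all three parts, and the overall architecture matches the paper's: for (i) a weight-induced tightness plus local Rellich--Kondrachov, for (ii) a H\"older estimate exploiting $|x|u\in L^{2}$, and for (iii) a pointwise bound on $A$ reduced to the Lebesgue embeddings of (i)--(ii). The differences are in execution rather than strategy. In (ii) the paper uses a single global H\"older with weight $(1+|x|^{2})$ and exponents $2N/(2N-1)$, $2N$, while you split $\mathbb{R}^{N}$ at $|x|=1$; both lead to the same integrability check (the paper's $\alpha=2N-1>N/2$ is equivalent to your $4N-2>N$). In (iii) the paper writes $A(|z|)\le C\bigl(|z|^{2+1/N}+|z|^{2-1/N}\bigr)+B(|z|)$ and then invokes all of (i), (ii) and the estimate \eqref{DB} for $B$, whereas your direct bound $A(|z|)\le C\bigl(|z|^{2}+|z|^{2-1/N}\bigr)$ needs only the trivial $L^{2}$ control and (ii); this is a slightly cleaner route since it bypasses both the $B$-term and the verification $2+1/N<2^{\ast}$.
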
 
\begin{proof} Item (i) is proved in  \cite[Lemma 3.1]{JZ2000}. Let $u\in \Sigma(\mathbb{R}^{N})$. By  H\"{o}lder's inequality with conjugate exponents $2N/(2N-1)$, $2N$ we see that  
\begin{equation*}
\int_{\mathbb{R}^{N}}|u(x)|^{2-\frac{1}{N}}dx\leq \left(\int_{\mathbb{R}^{N}}\frac{1}{(1+|x|^{2})^{\alpha}}dx\right)^{\frac{1}{2N}}\left(\int_{\mathbb{R}^{N}}(1+|x|^{2})|u(x)|^{2}dx\right)^{\frac{2N-1}{2N}},
\end{equation*}
where $\alpha=2N-1$. Since $\alpha>N/2$, we have that there exists a constant $C>0$ depending only on $N$ such that $\|u\|_{L^{2-{1}/{N}}(\mathbb{R}^{N})}\leq C\|u\|_{\Sigma}$, which  completes the proof of Item (ii). Concerning (iii), it follows form \eqref{IFD}  that  for every  $N\in \mathbb{N}$, there exist  $C>0$ depending only on $N$ such that
\begin{equation*}
A(|z|)\leq C(|z|^{2+\frac{1}{N}}+|z|^{2-\frac{1}{N}})+B(|z|) \quad \text{for any $z\in \mathbb{C}$}.
\end{equation*}
Notice that $2 <2+{1}/{N}< 2^{\ast}$. Thus from Item (i), Item (ii) and \eqref{DB} we have that if $u_{n}\rightarrow u$ as $n$ goes to $+\infty$ in $\Sigma(\mathbb{R}^{N})$, then $A(|u_{n}-u|)\rightarrow 0$ as $n$ goes to $+\infty$ in $L^{1}(\mathbb{R}^{N})$. This implies by \eqref{DA1} that $u_{n}\rightarrow u$ as $n$ goes to $+\infty$ in $L^{A}(\mathbb{R}^{N})$. This concludes the proof.
\end{proof}

For a proof of following result, see \cite[Lemma 2.5 and Lemma 2.6]{CL}.
\begin{lemma} \label{POPty}
Let $N\geq 1$ and  consider the functions $a$ and $b$ defined by \eqref{abapex}.  Then the following is true.\\
{\rm (i)} The operator $u\rightarrow a(u)$ is continuous from  $L^{A}(\mathbb{R}^{N})$  into $L^{A^{\prime}}(\mathbb{R}^{N})$. Moreover, the image under a of  every bounded subset of $L^{A}(\mathbb{R}^{N})$ is a bounded subset of $L^{A^{\prime}}(\mathbb{R}^{N})$. \\
{\rm (ii)} The operator $u\rightarrow b(u)$ is continuous from  $H^{1}(\mathbb{R}^{N})$  into $H^{-1}(\mathbb{R}^{N})$. Moreover, the image under b of every bounded subset of $H^{1}(\mathbb{R}^{N})$ is a bounded subset of $H^{-1}(\mathbb{R}^{N})$.\\
\end{lemma}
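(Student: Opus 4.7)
\noindent\textbf{Proof plan for Lemma~\ref{POPty}.}

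The approach rests on a key structural fact about the function $A$ defined in \eqref{IFD}: it is engineered so that $B = F + A$ vanishes identically on $[0, e^{-3}]$, while $B(s) \sim s^2 \log s^2$ as $s \to \infty$. Consequently $b(z) = 0$ for $|z| \le e^{-3}$, and for every $\epsilon > 0$ there exists $C_\epsilon > 0$ with $|b(z)| \le C_\epsilon |z|^{1+\epsilon}$ for all $z \in \mathbb{C}$. Moreover, $A$ is an N-function satisfying the $\Delta_2$ condition both at $0$ and at infinity, so norm and modular convergence coincide in $L^A$, in the sense already recorded in Proposition~\ref{orlicz}.

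For part (i), I would first derive from the piecewise formula the pointwise Orlicz inequality $A^{\ast}(|a(z)|) \le C\,A(|z|)$ for $z \in \mathbb{C}$, where $A^{\ast}$ denotes the Young complement of $A$. This hinges on the elementary relation $A(s)/s \sim A'(s)$ (derivative) and on the Legendre--Fenchel identity $A^{\ast}(A'(s)) = s A'(s) - A(s) \le C\,A(s)$, valid under $\Delta_2$. Integrating this pointwise bound and invoking \eqref{DA1} applied to $A^{\ast}$ yields boundedness of the map $u \mapsto a(u)$ from $L^A$ into $L^{A^{\ast}}$. For continuity, take $u_n \to u$ in $L^A$; by Proposition~\ref{orlicz}(i), $A(|u_n|) \to A(|u|)$ in $L^1$, so along a subsequence $u_n \to u$ a.e.; since $a$ extends continuously to $0$ (because $A(s)/s \to 0$ as $s \to 0^+$), $a(u_n) \to a(u)$ a.e. The pointwise domination together with the analogue of Proposition~\ref{orlicz}(ii) for $A^{\ast}$ then upgrades this a.e.\ convergence to convergence in $L^{A^{\ast}}$.

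For part (ii), boundedness is obtained by combining the growth bound $|b(z)| \le C_\epsilon |z|^{1+\epsilon}$ with H\"older's inequality and the Sobolev embedding $H^1 \hookrightarrow L^{2^{\ast}}$: for $u, \varphi \in H^1$,
\[
|\langle b(u), \varphi \rangle| \le \|b(u)\|_{L^{(2^{\ast})'}} \|\varphi\|_{L^{2^{\ast}}} \le C \|u\|_{L^{(1+\epsilon)(2^{\ast})'}}^{1+\epsilon} \|\varphi\|_{H^1},
\]
with $\epsilon$ chosen small enough that $(1+\epsilon)(2^{\ast})' \le 2^{\ast}$ (possible for every $N \ge 1$). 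For continuity, I would establish the Lipschitz-type pointwise bound $|b(z) - b(w)| \le C_\epsilon(|z|^\epsilon + |w|^\epsilon)|z - w|$ in the same spirit as the estimate preceding \eqref{DB}, exploiting that $b$ is smooth away from $0$ and vanishes near $0$. H\"older and Sobolev then yield
\[
\|b(u) - b(v)\|_{H^{-1}} \le C \bigl(1 + \|u\|_{H^1}^{1+\epsilon} + \|v\|_{H^1}^{1+\epsilon}\bigr) \|u - v\|_{L^2},
\]
which gives the required continuity and, in passing, the bounded-into-bounded statement.

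The main obstacle I foresee is the pointwise Orlicz inequality $A^{\ast}(|a(z)|) \le C\, A(|z|)$ underlying part (i): since $A$ is a non-standard N-function with a logarithmic correction near $0$, bounding its Young complement $A^{\ast}$ in that regime requires some care and leans essentially on the $\Delta_2$-condition. Once this inequality is in hand, both statements of the lemma reduce to soft arguments combining Proposition~\ref{orlicz}, subsequential a.e.\ convergence, and Sobolev embedding, so the analytic core of the proof is precisely this pointwise Orlicz estimate.
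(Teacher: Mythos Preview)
The paper does not give its own proof of this lemma; it simply refers the reader to \cite[Lemma 2.5 and Lemma 2.6]{CL}. Your plan is a correct reconstruction of that argument, and your identification of the pointwise Orlicz inequality $A^{\ast}(|a(z)|)\le C\,A(|z|)$ (which follows from $A(s)/s\le A'(s)$ together with the $\Delta_2$-bound $sA'(s)\le C A(s)$) as the analytic core of (i), and of the Lipschitz-type estimate for $b$ as the core of (ii), is on the mark.

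Two small corrections. In part (i), after passing to a subsequence converging a.e., you obtain $a(u_n)\to a(u)$ in $L^{A^\ast}$ only along that subsequence; you must then argue by the usual ``every subsequence has a further subsequence converging to the same limit'' device to conclude for the full sequence. In part (ii), the parameter $\epsilon$ cannot be taken arbitrarily small: for $N\ge 3$ you need $(1+\epsilon)(2^\ast)'\in[2,2^\ast]$ in order to control $\|u\|_{L^{(1+\epsilon)(2^\ast)'}}$ by $\|u\|_{H^1}$, which forces $\epsilon\in[2/N,\,4/(N-2)]$; for $N=1,2$ one takes any $\epsilon\ge 1$. With these adjustments your sketch goes through.
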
 

\begin{lemma} \label{APEX23}
The operator 
$$
L: u\rightarrow -\Delta u+\gamma(\gamma-1)|x|^{2}u-u\,  \mathrm{Log}\left|u\right|^{2}
$$ 
is continuous from  $\Sigma(\mathbb{R}^{N})$  to $\Sigma^{\prime}(\mathbb{R}^{N})$. The image under $L$  of every bounded subset of $\Sigma(\mathbb{R}^{N})$ is a bounded subset of $\Sigma^{\prime}(\mathbb{R}^{N})$.
\end{lemma}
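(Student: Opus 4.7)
The plan is to split $L$ into its three natural pieces
\[
L(u)= -\Delta u \;+\; \gamma(\gamma-1)|x|^{2}u \;-\; u\,\mathrm{Log}\,|u|^{2},
\]
and to show that each piece maps $\Sigma(\mathbb{R}^{N})$ continuously into $\Sigma'(\mathbb{R}^{N})$, while sending bounded sets into bounded sets. For the first two (linear) pieces this is straightforward: for every $u,v\in\Sigma(\mathbb{R}^{N})$ one has
\[
|\langle -\Delta u,v\rangle|+|\langle |x|^{2}u,v\rangle|
\leq \|\nabla u\|_{L^{2}}\|\nabla v\|_{L^{2}}+\|xu\|_{L^{2}}\|xv\|_{L^{2}}\leq 2\|u\|_{\Sigma}\|v\|_{\Sigma},
\]
so the maps $u\mapsto -\Delta u$ and $u\mapsto |x|^{2}u$ are (bounded and hence continuous) linear operators $\Sigma\to\Sigma'$.

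For the nonlinear piece I would exploit the splitting recorded after \eqref{abapex}, namely $z\,\mathrm{Log}\,|z|^{2}=b(z)-a(z)$, so that $-u\,\mathrm{Log}\,|u|^{2}=a(u)-b(u)$. The idea is to handle $a$ and $b$ in the two different functional settings in which they were studied in Lemma \ref{POPty}, and then transfer the resulting continuity to $\Sigma'$ via duality. More precisely:

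\textbf{(a)} Lemma \ref{POPO}(iii) gives the continuous embedding $\Sigma(\mathbb{R}^{N})\hookrightarrow L^{A}(\mathbb{R}^{N})$. Dualizing, $L^{A'}(\mathbb{R}^{N})\hookrightarrow \Sigma'(\mathbb{R}^{N})$ continuously. Composing with Lemma \ref{POPty}(i), the chain
\[
\Sigma(\mathbb{R}^{N}) \;\hookrightarrow\; L^{A}(\mathbb{R}^{N}) \;\stackrel{a}{\longrightarrow}\; L^{A'}(\mathbb{R}^{N}) \;\hookrightarrow\; \Sigma'(\mathbb{R}^{N})
\]
is continuous and sends bounded sets to bounded sets.

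\textbf{(b)} As recalled in the introduction, Hardy's inequality yields $\Sigma(\mathbb{R}^{N})\hookrightarrow H^{1}(\mathbb{R}^{N})$ continuously, hence by duality $H^{-1}(\mathbb{R}^{N})\hookrightarrow \Sigma'(\mathbb{R}^{N})$ continuously. Composing with Lemma \ref{POPty}(ii), the chain
\[
\Sigma(\mathbb{R}^{N}) \;\hookrightarrow\; H^{1}(\mathbb{R}^{N}) \;\stackrel{b}{\longrightarrow}\; H^{-1}(\mathbb{R}^{N}) \;\hookrightarrow\; \Sigma'(\mathbb{R}^{N})
\]
is continuous and sends bounded sets to bounded sets.

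Summing the four contributions, $L:\Sigma(\mathbb{R}^{N})\to\Sigma'(\mathbb{R}^{N})$ is continuous and bounded on bounded sets, which is exactly the statement to be proved. There is no serious obstacle here: the whole argument is a duality/composition exercise, and all the analytic work (the mapping properties of $a$ and $b$, and the embedding $\Sigma\hookrightarrow L^{A}$) has already been carried out in Lemma \ref{POPty} and Lemma \ref{POPO}. The only point that deserves a careful sentence is the observation that continuity of an embedding $X\hookrightarrow Y$ transfers to the dual embedding $Y'\hookrightarrow X'$ with the same norm, which is what allows us to reinterpret the image spaces $L^{A'}$ and $H^{-1}$ as subspaces of $\Sigma'$.
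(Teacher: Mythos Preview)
Your proof is correct and follows essentially the same route as the paper's own argument: both treat the linear part $-\Delta+\gamma(\gamma-1)|x|^{2}$ directly, split the nonlinearity as $a(u)-b(u)$, and then invoke Lemma~\ref{POPty} together with the embedding of Lemma~\ref{POPO}(iii) (and its dual) for $a$, and the embedding $\Sigma\hookrightarrow H^{1}$ (and its dual) for $b$. Your write-up is in fact slightly more explicit than the paper's, since you spell out the duality step $L^{A'}\hookrightarrow\Sigma'$ and $H^{-1}\hookrightarrow\Sigma'$ and address the boundedness-on-bounded-sets claim directly.
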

\begin{proof}
It is clear $-\Delta +\gamma(\gamma-1)|x|^{2}$ is continuous from  $\Sigma(\mathbb{R}^{N})$ to $\Sigma^{\prime}(\mathbb{R}^{N})$. 
Hence, we need to prove the continuity of the nonlinearity part of $L$.  Indeed, by Lemma \ref{POPty}(i), $u\rightarrow a(u)$ is continuous  from $L^{A}({\mathbb{R}}^{N})$ to $L^{A^{\prime}}({\mathbb{R}}^{N})$,   which implies by Lemma \ref{POPO}(iii), from $\Sigma(\mathbb{R}^{N})$ to $\Sigma^{\prime}(\mathbb{R}^{N})$. Finally, applying  Lemma \ref{POPty}(ii), we see that the operator $u\rightarrow a(u)-b(u)=-u\,\mathrm{Log}\left|u\right|^{2}$  is continuous  from $\Sigma(\mathbb{R}^{N})$ to $\Sigma^{\prime}(\mathbb{R}^{N})$. This completes of proof Lemma \ref{APEX23}.
\end{proof}

\begin{proof}[\bf {Proof of Proposition \ref{DFFE}}] 
We first show that $E$  is continuous on $\Sigma(\mathbb{R}^{N})$. Note that $E$ can be rewritten in the following form
\begin{equation}\label{CCC}
E(u)=\frac{1}{2}\|\nabla u\|_{L^{2}}^{2}+\frac{\gamma(\gamma-1)}{2}\|xu\|_{L^{2}}^{2} +\frac{1}{2}\int_{\mathbb{R}^{N}}A(\left|u_{}\right|)dx-\frac{1}{2}\int_{\mathbb{R}^{N}}B(\left|u_{}\right|)dx.
\end{equation}
The first and second term in the right-hand side of \eqref{CCC} are continuous on $\Sigma(\mathbb{R}^{N})$. Hence, we need to prove the continuity of the nonlinearity part of $E$. Combining  Proposition \ref{orlicz}(i) and Lemma \ref{POPO}(iii) we obtain that  the third term is continuous on $\Sigma(\mathbb{R}^{N})$. Moreover, by \eqref{DB} we have that the fourth term  in the right-hand side of \eqref{CCC} is continuous on $\Sigma(\mathbb{R})$, which implies that $E\in C(\Sigma(\mathbb{R}^{N}),\mathbb{R})$. Next it is easily seen  that, for $u,v\in \Sigma(\mathbb{R}^{N})$, $t\in (-1,1)$ (see \cite[Proposition 2.7]{CL}),
\begin{equation*}
\lim_{t\rightarrow 0} \frac{E(u+tv)-E(u)}{t}=\bigl\langle (-\Delta u +\gamma(\gamma-1)|x|^{2}u-u\, \mbox{Log}\left|u\right|^{2}-u,v\bigl\rangle.
\end{equation*}
Thus, $E$ is G\^ateaux differentiable.  By virtude of Lemma \ref{APEX23} we conclude that $E$ is  Fr\'echet differentiable.
\end{proof}

\section{The Cauchy problem}
\label{S:2}

In this section we sketch the proof of the global well-posedness of \eqref{0NL} for any $\gamma > 1$  as stated in Proposition \ref{PCS}. A similar technique was applied by Cazenave \cite[Theorem 9.3.4]{CB} in the case of the NLS equation \eqref{0NL} without the term $V(x)$. We first construct a sequence of global weak  solutions of a regularized Cauchy problem in $C(\mathbb{R},\Sigma(\mathbb{R}^{N}))$ which converges to a weak solution of the equation \eqref{0NL}. This produces a weak solution. Then, applying some properties of the logarithmic nonlinearity  we show the uniqueness of the weak solution of equation \eqref{0NL}.

Before outlining the main ideas of the proof of Proposition \ref{PCS}, we fix some definitions that will be useful in the sequels. For $z\in \mathbb{C}$ and $m\in \mathbb{N}$, we introduce the functions $a_{m}$ and $b_{m}$ by 
\begin{equation*}
a_{m}(z)=
z\tilde a_m(|z|),\quad\, 
\tilde a_m(s):=
\begin{cases}
\frac{A(s)}{s^{2}}, &\text{if $s\geq \frac{1}{m}$,}\\
m^2 A(\frac{1}{m}) , &\text{if $0\leq s\leq \frac{1}{m}$,}
\end{cases}
\end{equation*}
\begin{equation*}
b_{m}(z)=
z\tilde b_m(|z|),\quad\, 
\tilde b_m(s):=
\begin{cases}
\frac{B(s)}{s^{2}} , &\text{if $0\leq s\leq {m}$,}\\
\frac{B(m)}{m^2}, &\text{if $s\geq {m}$.}
\end{cases}
\end{equation*}
where $A$ and $B$ were defined in \eqref{abapex}. We will need the following family of  nonlinearities given by $g_{m}(z)=b_{m}(z)-a_{m}(z)$ for any fixed $m\in \mathbb{N}$ and for every $z\in \mathbb{C}$.

Now we need to construct an appropriate sequence of weak solutions of the following regularized Cauchy problem
\begin{equation}\label{AHAX}
i\partial_{t}u^{m}+\Delta u^{m}-\gamma(\gamma-1)|x|^{2}u^{m} +g_{m}(u^{m})=0, \quad m\in \mathbb{N}.\\
\end{equation}

\begin{proposition} \label{APCS}
For every  $u_{0}\in \Sigma(\mathbb{R}^{N})$, there is a unique global solution  $u^{m}\in C(\mathbb{R},\Sigma(\mathbb{R}^{N}))\cap C^{1}(\mathbb{R}, \Sigma^{\prime}(\mathbb{R}^{N}))$ of problem \eqref{AHAX}  with $u(0)=u_{0}$. Furthermore, the mass and total energy associated with \eqref{AHAX} are conserved in time, namely
\begin{equation}\label{ACLW}
{E}_{m}(u^{m}(t))={E}_{m}(u_{0})\quad  and \quad \left\|u^{m}(t)\right\|^{2}_{L^{2}}=\left\|u_{0}\right\|^{2}_{L^{2}}\quad  \text{for all $t\in \mathbb{R}$},
\end{equation}
where 
\begin{equation*}
E_{m}(u)=\frac{1}{2}\left\|\nabla u^{}\right\|^{2}_{L^{2}}+\frac{\gamma(\gamma-1)}{2}\left\|x u^{}\right\|^{2}_{L^{2}}   +\int_{\mathbb{R}^N}\Phi_{m}(|u|)dx-\int_{\mathbb{R}^N}\Psi_{m}(|u|)dx,
\end{equation*}
and the functions $t\mapsto \Phi_{m}(t)$ and $t\mapsto \Psi_{m}(t)$  are defined by
\begin{equation*}
\Phi_{m}(t):=\int^{t}_{0}s\tilde a_{m}(s)ds \quad \text{and} \quad \Psi_{m}(t):=\int^{t}_{0}s\tilde b_{m}(s)ds.
\end{equation*}
\end{proposition}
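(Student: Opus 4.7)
The plan is to recast \eqref{AHAX} as a semilinear problem with a globally Lipschitz nonlinearity, apply a standard contraction argument on the Duhamel formulation, and then obtain the conservation laws \eqref{ACLW} through a density/regularization procedure.

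First I would record the properties of the regularized nonlinearity: for each fixed $m\in\mathbb N$, the truncations at $s=1/m$ and $s=m$ remove the singularities of $A(s)/s^{2}$ and $B(s)/s^{2}$, so that $\tilde a_m,\tilde b_m\in L^\infty([0,\infty))$. A direct check then shows that $g_m(z)=z(\tilde b_m(|z|)-\tilde a_m(|z|))$ is globally Lipschitz on $\mathbb C$, vanishes at $z=0$, and satisfies the pointwise gauge identity $\mathrm{Im}(g_m(z)\overline z)=0$.

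Next I would set up the linear framework: $H:=-\Delta+\gamma(\gamma-1)|x|^{2}$ is self-adjoint on $L^{2}(\mathbb R^{N})$ with form domain $\Sigma(\mathbb R^{N})$, so Stone's theorem gives a unitary group $\{e^{-itH}\}_{t\in\mathbb R}$ on $L^{2}$, and Mehler's formula shows that it preserves $\Sigma(\mathbb R^{N})$ uniformly on compact time intervals. Rewriting \eqref{AHAX} as the Duhamel integral equation
$$u^m(t)=e^{-itH}u_0+i\int_0^t e^{-i(t-s)H}g_m(u^m(s))\,ds,$$
the Lipschitz bound makes the right-hand side a contraction in $C([-T,T];\Sigma(\mathbb R^{N}))$ for $T$ small enough (depending only on $m$ and $\|u_0\|_\Sigma$), yielding a unique local $\Sigma$-solution; then $u^m\in C^{1}$ into $\Sigma'(\mathbb R^{N})$ follows by reading off the equation, arguing as in Lemma \ref{APEX23} for the regularized operator.

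For conservation of mass one pairs \eqref{AHAX} with $u^m$ in the $\Sigma',\Sigma$ duality and takes imaginary parts, using the gauge identity. Energy conservation is the delicate step: I would pick $u_{0,k}$ in the operator domain $\{u\in H^{2}:|x|^{2}u\in L^{2}\}$ with $u_{0,k}\to u_0$ in $\Sigma$, so that the corresponding solutions $u^m_k$ are classical and the chain rule $\frac{d}{dt}E_m(u^m_k(t))=0$ is rigorous; one then passes to the limit $k\to\infty$ using continuous dependence and the continuity of $E_m$ on $\Sigma$, which follows from the boundedness of $\tilde a_m$ and $\tilde b_m$. Global existence follows from the blow-up alternative together with the a priori bound
$$\min\bigl(1,\gamma(\gamma-1)\bigr)\,\|u^m(t)\|_\Sigma^{2}\le 2E_m(u_0)+C_m\|u_0\|_{L^{2}}^{2},$$
obtained from $|\Phi_m(s)|+|\Psi_m(s)|\le C_m s^{2}$ together with $\Phi_m\ge 0$ and the non-negativity of the harmonic potential. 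The main obstacle is precisely the energy-conservation step: since $g_m$ is only Lipschitz and not $C^{1}$, the chain rule cannot be invoked directly on arbitrary $\Sigma$-solutions, and the $D(H)$-density argument must be combined carefully with the continuity of the flow $u_0\mapsto u^m(t)$ in $\Sigma(\mathbb R^{N})$.
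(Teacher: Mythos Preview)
Your approach is essentially that of the paper, which in fact gives only a one-line proof: it observes that $g_m$ is globally Lipschitz $\mathbb{C}\to\mathbb{C}$ and then defers entirely to Theorem~9.2.6 and Remark~9.2.8 of Cazenave's book \cite{CB} for well-posedness in $\Sigma$ (via Strichartz estimates and a fixed-point argument) together with the conservation laws. Your sketch is a reasonable unpacking of that citation; the only technical caution is that a merely Lipschitz $g_m$ does not directly yield a contraction in the $\Sigma$-\emph{metric} (you cannot bound $\|\nabla(g_m(u)-g_m(v))\|_{L^2}$ by $\|\nabla(u-v)\|_{L^2}$), so one should run the contraction in the $C([-T,T];L^{2})$-distance while confining the iterates to a ball of $C([-T,T];\Sigma)$, using $\|\nabla g_m(u)\|_{L^2}\le L_m\|\nabla u\|_{L^2}$ and $\|x\,g_m(u)\|_{L^2}\le \|h_m\|_\infty\|xu\|_{L^2}$ --- precisely the Kato-type scheme behind the cited theorem.
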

\begin{proof}
Since $g_{m}$ is globally Lipschitz continuous $\mathbb{C}\rightarrow \mathbb{C}$, the global well-posedness
follows from Strichartz inequalities and a fixed point argument; see e.g.\ \cite[Theorem 9.2.6 and Remark 9.2.8]{CB}.
\end{proof}

In the following we will make use of the following lemma.
\begin{lemma}\label{3ACS} 
Assume that $\left\{u^{{m}}\right\}_{m\in\mathbb{N}}$ is a sequence bounded in  $L^{\infty}(\mathbb{R},\Sigma(\mathbb{R}^{N}))$ and in $W^{1,\infty}(\mathbb{R},\Sigma^{\prime}(\mathbb{R}^{N}))$. Then  there exist a subsequence, which we still denote by $\left\{u^{{m}}\right\}_{m\in\mathbb{N}}$, and there exist  a function $u\in L^{\infty}(\mathbb{R},\Sigma(\mathbb{R}^{N}))\cap W^{1,\infty}(\mathbb{R},\Sigma^{\prime}(\mathbb{R}^{N}))$ such that the following conclusions are valid.\\
{\rm (i)}  $u^{m}(t)\rightharpoonup u^{}(t)$  in  $\Sigma(\mathbb{R}^{N})$  as $m\rightarrow \infty$ for all $t\in \mathbb{R}$.\\
{\rm (ii)}	For any $t\in\mathbb{R}$ there is a subsequence  $m_{j}$ with $u_{}^{m_{j}}(x,t)\rightarrow u_{}^{}(x,t)$   as $j\rightarrow \infty$, for a.e. $x\in \mathbb{R}^{N}$.\\
{\rm (iii)}	$u_{}^{m_{}}(x,t)\rightarrow u_{}^{}(x,t)$ as $m\rightarrow \infty$,  for a.e.  $(x,t)\in \mathbb{R}^{N}\times\mathbb{R}$. 
\end{lemma}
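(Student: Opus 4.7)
The plan is to combine weak-$*$ compactness in $L^\infty$ with an Arzel\`a--Ascoli-type upgrade to pointwise weak convergence, and then to use the compact embedding $\Sigma\hookrightarrow L^2$ (Lemma \ref{POPO}(i)) together with the Aubin--Lions--Simon lemma for the strong convergence statements.

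Since $\Sigma(\mathbb{R}^N)$ is a separable reflexive Hilbert space, $L^\infty(\mathbb{R},\Sigma)$ is the dual of $L^1(\mathbb{R},\Sigma^\prime)$ and, symmetrically, $L^\infty(\mathbb{R},\Sigma^\prime)$ is the dual of $L^1(\mathbb{R},\Sigma)$. Banach--Alaoglu then yields, after extraction of a subsequence, a function $u\in L^\infty(\mathbb{R},\Sigma)\cap W^{1,\infty}(\mathbb{R},\Sigma^\prime)$ with $u^m\rightharpoonup^\ast u$ in $L^\infty(\mathbb{R},\Sigma)$ and $\partial_t u^m\rightharpoonup^\ast \partial_t u$ in $L^\infty(\mathbb{R},\Sigma^\prime)$. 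The uniform bound on $\partial_t u^m$ in $L^\infty(\mathbb{R},\Sigma^\prime)$ makes $\{u^m\}$ equi-Lipschitz as maps $\mathbb{R}\to\Sigma^\prime$. To establish (i), fix a countable dense set $\{t_k\}\subset\mathbb{R}$: since $\{u^m(t_k)\}$ is bounded in the reflexive space $\Sigma$, a diagonal extraction produces a subsequence (still denoted $u^m$) and elements $v(t_k)\in\Sigma$ with $u^m(t_k)\rightharpoonup v(t_k)$ in $\Sigma$ for every $k$. For an arbitrary $t\in\mathbb{R}$ and any $\varphi\in\Sigma\subset\Sigma^\prime$, the equi-Lipschitz estimate bounds $|\langle u^m(t)-u^m(t_k),\varphi\rangle|\leq C|t-t_k|\|\varphi\|_{\Sigma}$ uniformly in $m$, so $\{\langle u^m(t),\varphi\rangle\}_m$ is Cauchy in $\mathbb{R}$; together with the boundedness of $\{u^m(t)\}$ in $\Sigma$, this produces a weak limit $u^m(t)\rightharpoonup u(t)$ in $\Sigma$ for every $t\in\mathbb{R}$, with $u$ coinciding (after possible redefinition on a $t$-null set) with the weak-$\ast$ limit already obtained.

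Item (ii) follows at once from (i) and the compact embedding $\Sigma\hookrightarrow L^2$: for fixed $t$ one has $u^m(t)\to u(t)$ strongly in $L^2(\mathbb{R}^N)$, and hence a subsequence converges a.e.\ in $x$. For (iii), apply the Aubin--Lions--Simon lemma to the triple $\Sigma\subset L^2\subset \Sigma^\prime$ (the first inclusion being compact, the second continuous) on any compact interval $I\subset\mathbb{R}$: the bounds of $\{u^m\}$ in $L^\infty(I,\Sigma)$ and of $\{\partial_t u^m\}$ in $L^\infty(I,\Sigma^\prime)$ yield relative compactness in $C(I, L^2(\mathbb{R}^N))$, hence strong convergence in $L^2(I\times \mathbb{R}^N)$ along a subsequence, and thus a.e.\ convergence on $I\times\mathbb{R}^N$. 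A final diagonal extraction along $I_n=[-n,n]$ then produces a single subsequence converging a.e.\ on $\mathbb{R}^N\times\mathbb{R}$.

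The only non-routine ingredient is the argument for (i): upgrading weak-$\ast$ convergence in $L^\infty_t$, which only identifies the limit up to a null set in $t$, to pointwise weak convergence in $\Sigma$ for \emph{every} $t\in\mathbb{R}$. This is exactly where the $W^{1,\infty}$ regularity in time is essential, through equi-Lipschitz continuity into $\Sigma^\prime$ combined with the density argument above and Banach--Alaoglu in $\Sigma$; once (i) is in hand, parts (ii) and (iii) are immediate consequences of the compactness properties of the embedding $\Sigma\hookrightarrow L^2$.
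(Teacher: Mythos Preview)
Your proposal is correct and follows the standard route; in fact the paper does not give a proof at all but simply refers to \cite[Lemma 9.3.6]{CB} and omits the details. What you have written is precisely a spelled-out version of that omitted argument: weak-$\ast$ compactness combined with the equi-Lipschitz estimate in $\Sigma'$ and a diagonal extraction for (i), the compact embedding $\Sigma\hookrightarrow L^{2}$ for (ii), and an Aubin--Lions--Simon-type compactness on bounded time intervals followed by a further diagonal extraction for (iii).
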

\begin{proof}
The proof  follows a similar argument used in  \cite[Lemma 9.3.6]{CB}, and we omit the details.
\end{proof}

\begin{proof}[\bf {Proof of Proposition \ref{PCS}}] Here, for simplicity, we assume that $\gamma(\gamma-1)=1$. We proceed by approximating
the equation as follows (see \cite[Theorem 9.3.4]{CB}): taking into account Lemma \ref{APCS}, we get that exists  a unique solution $u^{m}\in C(\mathbb{R},\Sigma(\mathbb{R}^{N}))\cap C^{1}(\mathbb{R}, \Sigma^{\prime}(\mathbb{R}^{N}))$ of the regularized NLS equation \eqref{AHAX} with $u(0)=u_{0}$. In turn, by combining the conservation of energy and charge \eqref{ACLW}  we obtain that the sequence of approximating solutions $u^{m}$ is bounded in $L^{\infty}(\mathbb{R}, \Sigma(\mathbb{R}^{N}))$  (see Step 2 of \cite[Theorem 9.3.4]{CB} for example). On the other hand, notice that the following inequality can be easily shown
\begin{equation*}
|g_{m}(z)|^{2}\leq C(|z|^{2+\frac{1}{N}}+|z|^{2-\frac{1}{N}}) \quad \text{for all $z\in \mathbb{C}$ and all $m\in \mathbb{N}$}.
\end{equation*}
Hence, by Lemma \ref{POPO}, we get that $g_{m}(u^{m})$ is bounded in $L^{\infty}(\mathbb{R}, \Sigma^{\prime}(\mathbb{R}^{N}))$, which implies by \eqref{AHAX}  that the sequence $\partial_{t}u^{m}$ is bounded in $L^{\infty}(\mathbb{R}, \Sigma^{\prime}(\mathbb{R}^{N}))$. 
We can now conclude that $\left\{u^{{m}}\right\}_{m\in\mathbb{N}}$ satisfies the conditions of  Lemma \ref{3ACS}. Let $u$ be the limit of $u^{m}$.

We claim that the limiting function $u\in L^{\infty}(\mathbb{R}, \Sigma(\mathbb{R}^{N}))$ is a weak solution of the NLS \eqref{0NL}. Indeed, 
it follows from  property (i) of Lemma \ref{3ACS} that  $u(0)=u_{0}$. Furthermore, by \eqref{AHAX}, for any test  function $\psi\in C^{\infty}_{0}({\mathbb{R}^{N}})$  and $\phi\in C^{\infty}_{0}({\mathbb{R}})$ we get
\begin{equation}\label{3DPL}
\int_{\mathbb{R}}\left[-\left\langle i\, u^{m}, \psi\right\rangle \phi^{\prime}(t)+ \left\langle u^{m}, \psi\right\rangle_{\Sigma} \phi^{}(t)\right]\,dt+\int_{\mathbb{R}}\int_{\mathbb{R}^{N}}g_{m}(u^{m})\psi\phi\,dx\,dt=0.
\end{equation}
Taking into account that $g_{m}(z)\rightarrow z\, \mbox{Log}\left|z\right|^{2}$ pointwise in $z\in \mathbb{C}$ as $m\rightarrow+\infty$, via properties (i)-(iii) of Lemma \ref{3ACS} we get the following integral equation (see Step 3 of \cite[Theorem 9.3.4]{CB})
 \begin{equation}\label{ert}
\int_{\mathbb{R}}\left[-\left\langle i\, u^{}, \psi\right\rangle \phi^{\prime}(t)+ \left\langle u^{}, \psi\right\rangle_{\Sigma} \phi^{}(t)\right]\,dt+\int_{\mathbb{R}}\int_{\mathbb{R}^{N}}u_{}\, \mbox{Log}\left|u_{}\right|^{2}\psi\phi\,dx\,dt=0.
\end{equation}

We are now ready to conclude the proof of proposition. Since $u\in L^{\infty}(\mathbb{R},\Sigma(\mathbb{R}^{N}))$, in view of Lemma \ref{APEX23} and \eqref{ert} we see that $u_{t}\in L^{\infty}(\mathbb{R},\Sigma^{\prime}(\mathbb{R}^{N}))$ and $u$ is a weak solution of problem \eqref{0NL}. Next, to prove the uniqueness of the weak solution and the conservation of charge and energy one can follow the argument of \cite[Theorem 9.3.4]{CB}. The proof is now concluded.
\end{proof}

\section{Variational analysis}
\label{S:3}

This section is devoted to the proof of Theorem \ref{ESSW}.  We begin with the logarithmic Sobolev inequality. See \cite[Theorem 8.14]{ELL}.
\begin{lemma} \label{L1}
Let $\alpha>0$ and assume that $f\in H^{1}(\mathbb{R}^{N})\setminus\{0\}$.  Then
\begin{equation}\label{SII}
\int_{\mathbb{R}^{N}}\left|f(x)\right|^{2}\mathrm{Log}\left|f(x)\right|^{2}dx\leq \frac{\alpha^{2}}{\pi} \|\nabla f \|^{2}_{L^{2}}+\left(\mathrm{Log} \|f \|^{2}_{L^{2}}-N\left(1+\mathrm{Log}\,\alpha\right)\right)\|f \|^{2}_{L^{2}}.
\end{equation}
Furthermore, there is equality if and only if the function $f$ is, up to translation, a multiple of  $e^{\left\{-\pi \left|x\right|^{2}/2 \alpha^{2}\right\}}$.
\end{lemma}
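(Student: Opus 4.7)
The plan is to reduce this classical sharp inequality to a one-parameter variational problem by scaling, identify Gaussians as extremals, and then prove global minimality via symmetric decreasing rearrangement (or, alternatively, via differentiating the sharp Gagliardo--Nirenberg inequality at $p=2^+$).

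\smallskip

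\emph{Scaling reduction.} For $\lambda>0$, the dilation $f_\lambda(x):=\lambda^{N/2}f(\lambda x)$ preserves $\|f\|_{L^2}$, sends $\|\nabla f\|^2_{L^2}$ to $\lambda^2\|\nabla f\|^2_{L^2}$, and shifts $\int|f|^2\log|f|^2\,dx$ by the additive constant $N\log\lambda \cdot \|f\|^2_{L^2}$. Substituting $f_\lambda$ into the putative inequality with parameter $\alpha$ yields exactly the inequality for $f$ with parameter $\alpha\lambda$. So it suffices to prove \eqref{SII} for a single value of $\alpha$, say $\alpha^2=\pi$, after which the general statement follows by rescaling. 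After normalizing $\|f\|_{L^2}=1$ (possible by homogeneity of both sides in $\|f\|_{L^2}$), the inequality reduces to
$$
\int_{\mathbb R^N}|f|^2\log|f|^2\,dx \leq \|\nabla f\|^2_{L^2}-\frac{N}{2}(1+\log\pi).
$$

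\smallskip

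\emph{Identification of the extremal and variational setup.} A direct computation (using $\int e^{-\pi|x|^2}\,dx=1$ and $\int |x|^2 e^{-\pi|x|^2}\,dx = N/(2\pi)$) shows that $f_\ast(x)=\pi^{N/4}e^{-\pi|x|^2/2}$, the normalized Gaussian in the $\alpha^2=\pi$ case, saturates the inequality. This motivates considering the minimization problem
$$
m := \inf\bigl\{\mathcal{J}(f) : f\in H^1(\mathbb R^N),\ \|f\|_{L^2}=1\bigr\},\qquad \mathcal{J}(f):=\|\nabla f\|^2_{L^2}-\int|f|^2\log|f|^2\,dx,
$$
whose Euler--Lagrange equation $-\Delta f - f\log|f|^2 = \mu f$ is solved by $f_\ast$ with explicit Lagrange multiplier $\mu$. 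The inequality is then equivalent to showing $m=\mathcal{J}(f_\ast)=-\frac{N}{2}(1+\log\pi)$.

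\smallskip

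\emph{Global minimality via rearrangement and ODE uniqueness.} To pass from a critical point to a global minimizer, I would first use the Pólya--Szeg\H o inequality: the symmetric decreasing rearrangement $f^\ast$ satisfies $\|\nabla f^\ast\|_{L^2}\leq \|\nabla f\|_{L^2}$ while $\int F(|f^\ast|)\,dx=\int F(|f|)\,dx$ for any measurable $F$, so $\mathcal{J}(f^\ast)\leq \mathcal{J}(f)$ and we may restrict to radial, nonnegative, decreasing functions. A minimizing sequence is bounded in $H^1(\mathbb R^N)$ by the inequality $|t^2\log t^2|\leq C(t^{2+1/N}+t^{2-1/N})$ already used in Section \ref{S:1}; radial decreasing symmetry yields tightness, so by Lemma \ref{POPO}(i) (or its $\mathbb R^N$ analogue for radial $H^1$) a subsequence converges strongly in $L^p$ for $p\in(2,2^\ast)$, giving strong convergence in the logarithmic term and hence existence of a radial minimizer. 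This minimizer solves the radial ODE $-f''-\frac{N-1}{r}f'-f\log|f|^2=\mu f$ with $f'(0)=0$ and $f(r)\to 0$ as $r\to\infty$; a phase-plane / Cauchy-uniqueness argument (applied carefully around zeros of $f$) then identifies the solution with a scaled Gaussian.

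\smallskip

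\emph{Main obstacle.} The delicate step is Step 3: upgrading from a critical point of $\mathcal J$ to a global minimizer and ruling out any non-Gaussian solution of the Euler--Lagrange equation. My preferred alternative, which bypasses the ODE classification, is to derive \eqref{SII} by differentiating the sharp Gagliardo--Nirenberg inequality
$$
\|f\|_{L^p}^{p}\leq C(N,p)\,\|\nabla f\|_{L^2}^{\theta p}\,\|f\|_{L^2}^{(1-\theta)p},\qquad \theta=\tfrac{N(p-2)}{2p},
$$
at $p=2^+$: taking logarithms, dividing by $p-2$, and letting $p\downarrow 2$ produces \eqref{SII} with the sharp constant inherited from the Gagliardo--Nirenberg extremal, which is itself a Gaussian. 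This reduces the whole lemma to the (well-known) sharp Gagliardo--Nirenberg inequality, and the equality case transfers immediately.
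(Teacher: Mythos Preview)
The paper does not actually prove this lemma: the sentence immediately preceding it reads ``See \cite[Theorem 8.14]{ELL},'' and no argument is given. So there is no proof in the paper to compare your proposal against; the authors are simply quoting the sharp logarithmic Sobolev inequality from Lieb--Loss.

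On the substance of your sketch: the scaling and $L^2$-normalization reductions are correct in spirit, though there are arithmetic slips (with $\alpha^2=\pi$ and $\|f\|_{L^2}=1$ the constant on the right is $-N-\tfrac{N}{2}\log\pi$, not $-\tfrac{N}{2}(1+\log\pi)$, and the extremal stated in the lemma for $\alpha^2=\pi$ is $e^{-|x|^2/2}$, not $\pi^{N/4}e^{-\pi|x|^2/2}$). More importantly, your ``preferred alternative'' contains a real error: the extremals of the sharp Gagliardo--Nirenberg inequality for fixed $p\in(2,2^\ast)$ are \emph{not} Gaussians but the ground states of $-\Delta Q+Q=Q^{p-1}$, which decay like $e^{-|x|}$ and are not explicit. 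Recovering the log-Sobolev constant and its equality case by sending $p\downarrow 2$ therefore requires a nontrivial asymptotic analysis of both $C(N,p)$ and the optimizers; nothing ``transfers immediately.'' Your direct variational route (Step~3) is closer to a viable proof, but compactness of radial minimizing sequences on the unit $L^2$-sphere of $H^1(\mathbb{R}^N)$ is not free: $H^1_{\mathrm{rad}}\hookrightarrow L^2$ is not compact, and the $|t|^{2-1/N}$ control you invoke from Section~\ref{S:1} relied on the weight $|x|^2$ present in $\Sigma$, which you no longer have. The standard self-contained proofs (Gross via the Ornstein--Uhlenbeck semigroup, Carlen, or the argument in \cite{ELL}) are designed precisely to sidestep these compactness issues.
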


\begin{lemma}[Ground energy]
	\label{L2}
Let $\omega\in \mathbb{R}$ and $\gamma>1$. Then, the quantity $d(\omega)$ is given by
\begin{equation}\label{EA}
d(\omega)=\frac{1}{2}\|\phi_{\omega}\|_{L^{2}}^{2}=\frac{1}{2} {{\pi}^{\frac{N}{2}}{\gamma}^{-\frac{N}{2}}}e^{\omega+\gamma N},
\end{equation}
where $\phi_{\omega}$ is defined by \eqref{IGS1}.
\end{lemma}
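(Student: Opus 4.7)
The plan is to prove matching upper and lower bounds for $d(\omega)$, both attained by the Gaussian $\phi_\omega$.

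First I would verify the upper bound by checking that $\phi_\omega$ is admissible for \eqref{MPE}. Using $\nabla\phi_\omega = -\gamma x\phi_\omega$, $\Delta\phi_\omega = (\gamma^2|x|^2 - \gamma N)\phi_\omega$ and $\mathrm{Log}|\phi_\omega|^2 = \omega + \gamma N - \gamma|x|^2$, direct substitution shows $\phi_\omega$ solves \eqref{EP}; pairing this equation with $\phi_\omega$ in $L^2$ and integrating by parts gives $I_\omega(\phi_\omega) = 0$. A standard Gaussian integral yields $\|\phi_\omega\|_{L^2}^2 = \pi^{N/2}\gamma^{-N/2}e^{\omega+\gamma N}$, and the second identity in \eqref{MPE} then gives $d(\omega)\leq \frac{1}{2}\|\phi_\omega\|_{L^2}^2$.

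For the matching lower bound, fix $u\in\Sigma(\mathbb{R}^N)\setminus\{0\}$ with $I_\omega(u) = 0$ and apply Lemma \ref{L1} with the calibrated choice $\alpha = \sqrt{\pi/\gamma}$---the unique value for which $\phi_\omega$ saturates \eqref{SII}. Substituting the resulting bound into $I_\omega(u) = 0$ and rearranging delivers
\begin{equation*}
\frac{\gamma-1}{\gamma}\|\nabla u\|_{L^2}^2 + \gamma(\gamma-1)\|xu\|_{L^2}^2 \leq \|u\|_{L^2}^2\bigl[\,\mathrm{Log}\,\|u\|_{L^2}^2 - \omega - N - \tfrac{N}{2}\,\mathrm{Log}(\pi/\gamma)\bigr].
\end{equation*}
To turn the left-hand side into a multiple of $\|u\|_{L^2}^2$, I combine AM-GM with the Heisenberg uncertainty inequality $\|xu\|_{L^2}\|\nabla u\|_{L^2}\geq \frac{N}{2}\|u\|_{L^2}^2$ (obtained by integrating $\mathrm{div}\,x$ against $|u|^2$ and invoking Cauchy-Schwarz), bounding the left-hand side below by $N(\gamma-1)\|u\|_{L^2}^2$ via $\sqrt{\frac{\gamma-1}{\gamma}\cdot\gamma(\gamma-1)} = \gamma-1$. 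Exponentiating the resulting scalar inequality yields $\|u\|_{L^2}^2\geq \pi^{N/2}\gamma^{-N/2}e^{\omega+\gamma N} = \|\phi_\omega\|_{L^2}^2$, hence $d(\omega)\geq\frac{1}{2}\|\phi_\omega\|_{L^2}^2$.

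The main subtlety is calibrating these two auxiliary inequalities so that they are simultaneously tight at $\phi_\omega$: the equality case of Lemma \ref{L1} pins down a Gaussian of variance $\alpha^2/\pi$, and setting $\alpha = \sqrt{\pi/\gamma}$ aligns this with the variance $1/\gamma$ dictated by the harmonic potential (which in turn is what makes the Heisenberg estimate tight at $\phi_\omega$). The hypothesis $\gamma>1$ enters twice: it keeps the residual coefficient $(\gamma-1)/\gamma$ of $\|\nabla u\|_{L^2}^2$ positive after subtracting the log-Sobolev contribution, and it ensures the geometric-mean factor $\gamma-1$ arising in the AM-GM step is itself positive.
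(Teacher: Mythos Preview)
Your proof is correct and follows essentially the same route as the paper: the upper bound via $I_\omega(\phi_\omega)=0$, then the lower bound by combining the logarithmic Sobolev inequality with $\alpha^2=\pi/\gamma$ and a harmonic-oscillator-type estimate. The only cosmetic difference is that the paper quotes the spectral identity $\inf\{\|\nabla u\|_{L^2}^2+\gamma^2\|xu\|_{L^2}^2:\|u\|_{L^2}^2=1\}=\gamma N$ directly, whereas you derive the same bound from AM--GM together with the Heisenberg inequality $\|xu\|_{L^2}\|\nabla u\|_{L^2}\geq \tfrac{N}{2}\|u\|_{L^2}^2$; these are equivalent, and your version has the merit of being self-contained.
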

\begin{proof} We observe for further usage that $\|\phi_{\omega}\|_{L^{2}}^{2}={{\pi}^{\frac{N}{2}}{\gamma}^{-\frac{N}{2}}}e^{\omega+\gamma N}$ for every $\omega\in \mathbb{R}$. We first prove $2 d(\omega)\leq \|\phi_{\omega}\|_{L^{2}}^{2}$.  By direct computations, we obtain that $I_{\omega}(\phi_{\omega})=0$, which implies that, by the definition of $d(\omega)$,  $2 d(\omega)\leq \|\phi_{\omega}\|_{L^{2}}^{2}$. On the other hand, it is easily seen that
\begin{equation}\label{INF1}
\inf\left\{\|\nabla u\|_{L^{2}}^{2}+\gamma^{2}\|xu\|_{L^{2}}^{2}:u\in \Sigma(\mathbb{R}^{N}), \| u\|_{L^{2}}^{2}=1 \right\}=\gamma N.
\end{equation}
In particular, multiplying \eqref{INF1} by $\gamma^{-1}(\gamma-1)$ we get
\begin{equation}\label{IN}
 (\gamma-1)N\| u\|_{L^{2}}^{2}\leq\gamma^{-1}(\gamma-1)\|\nabla u\|_{L^{2}}^{2}+\gamma^{}(\gamma-1)\|xu\|_{L^{2}}^{2}.
\end{equation}
Now, let $u\in \Sigma(\mathbb{R}^{N})\setminus \left\{0 \right\}$ be such that  $I_{\omega}(u)=0$. By virtue of  the logarithmic Sobolev inequality with $\alpha^{2}={\pi}/{\gamma}$ and inequality \eqref{IN} we get
\begin{equation*}
\left(\omega+\gamma N+N\mbox{Log}(\sqrt{{\pi}/{\gamma}})\right)\left\|u\right\|^{2}_{L^{2}}\leq \left(\mbox{Log}\left\|u\right\|^{2}_{L^{2}}\right)\left\|u\right\|^{2}_{L^{2}},
\end{equation*}
which implies that $\left\|u\right\|^{2}_{L^{2}}\geq \|\phi_{\omega}\|_{L^{2}}^{2}$. Then, in view of the definition of $d(\omega)$, it follows  that $2 d(\omega)\geq \|\phi_{\omega}\|_{L^{2}}^{2}$. This conclude the proof.
\end{proof}

Next we give a useful lemma.

\begin{lemma} \label{L4}
Assume that $\left\{u_{n}\right\}$ is a bounded sequence in $\Sigma(\mathbb{R}^{N})$ satisfying as $n\rightarrow \infty$, $u_{n}\rightarrow u$ a.e. in $\mathbb{R}^{N}$. Then $u\in \Sigma(\mathbb{R}^{N})$ and 
\begin{equation*}
\lim_{n\rightarrow \infty}\int_{\mathbb{R}^{N}}\left\{\left|u_{n}\right|^{2}\mathrm{Log}\left|u_{n}\right|^{2}-\left|u_{n}-u\right|^{2}\mathrm{Log}\left|u_{n}-u\right|^{2}\right\}dx=\int_{\mathbb{R}^{N}}\left|u\right|^{2}\mathrm{Log}\left|u\right|^{2}dx.
\end{equation*}
\end{lemma}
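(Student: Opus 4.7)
The plan is to combine weak compactness in $\Sigma(\mathbb{R}^N)$ with the decomposition $F(z):=|z|^2\log|z|^2 = B(|z|)-A(|z|)$ of \eqref{IFD}, reducing the stated limit to Brezis--Lieb-type identities for $A$ and $B$ that follow from strong convergence in mixed Lebesgue spaces.

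First I would verify that $u\in \Sigma(\mathbb{R}^N)$. Since $\{u_n\}$ is bounded in the reflexive Hilbert space $\Sigma$, a subsequence satisfies $u_n\rightharpoonup v$ weakly in $\Sigma$; the continuous embedding $\Sigma\hookrightarrow L^2$ combined with the a.e.\ convergence $u_n\to u$ forces $u=v$, so $u\in\Sigma$. Next I would upgrade this to strong convergence of $u_n$ to $u$ in $L^q(\mathbb{R}^N)$ for $q\in[2-\tfrac{1}{N},\,2^\ast)$. For $q\in[2,2^\ast)$ this is the compact embedding of Lemma \ref{POPO}(i). For $q=2-\tfrac{1}{N}$ I would apply the same H\"older argument used in the proof of Lemma \ref{POPO}(ii) to $(u_n-u)\chi_{\{|x|>R\}}$: the uniform bound on $\|u_n\|_\Sigma$ yields
\[
\int_{|x|>R}|u_n-u|^{2-1/N}dx\le \Big(\int_{|x|>R}\tfrac{dx}{(1+|x|^2)^{2N-1}}\Big)^{1/(2N)}\big(C\,\|u_n-u\|_\Sigma^2\big)^{(2N-1)/(2N)},
\]
and the first factor vanishes as $R\to\infty$, giving tightness; combined with strong convergence in $L^2(B_R)$ (which implies strong convergence in $L^{2-1/N}(B_R)$ by H\"older) this produces strong convergence in $L^{2-1/N}(\mathbb{R}^N)$.

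Next I would treat $A$ and $B$ separately. From \eqref{IFD} one has the pointwise bounds $A(s)\le C(s^{2-1/N}+s^{2+1/N})$ and $|A'(s)|\le C(s^{1-1/N}+s^{1+1/N})$, so by the mean-value theorem and H\"older's inequality
\[
\Big|\!\int A(|u_n|)dx-\!\int A(|u|)dx\Big|\le C\big(\|u_n\|_{L^{2-1/N}}^{1-1/N}+\|u\|_{L^{2+1/N}}^{1+1/N}+\cdots\big)\|u_n-u\|_{L^{2\pm 1/N}},
\]
which tends to $0$ by the strong convergences above (note $2<2+\tfrac{1}{N}<2^\ast$); the same estimate with $u$ replaced by $0$, using $A(0)=0$, gives $\int A(|u_n-u|)dx\to 0$. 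For $B$ I would use the pointwise estimate underlying \eqref{DB}, namely $|B(z)-B(w)|\le C_\varepsilon(|z|^{1+\varepsilon}+|w|^{1+\varepsilon})|z-w|$ with $\varepsilon=1/N$: integrating and applying H\"older together with the strong convergences yields $\int B(|u_n|)dx\to\int B(|u|)dx$ and $\int B(|u_n-u|)dx\to 0$ exactly as above.

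Subtracting the four limits and using $F=B-A$ gives the desired identity
\[
\lim_{n\to\infty}\int\big[|u_n|^2\log|u_n|^2-|u_n-u|^2\log|u_n-u|^2\big]dx=\int|u|^2\log|u|^2dx.
\]
The main obstacle is the tightness argument that upgrades the compact $L^q$ embedding of Lemma \ref{POPO}(i) (which only gives strong convergence for $q\ge 2$) to strong convergence in $L^{2-1/N}(\mathbb{R}^N)$: this is crucial because the singularity of $-s^2\log s^2$ near $s=0$ is controlled only by a slightly subquadratic power, and it is here that the harmonic moment $|x|u_n\in L^2$ built into $\Sigma$ plays an essential role. Once this is in place the remaining Brezis--Lieb-type limits for $A$ and $B$ reduce to routine mean-value-theorem and H\"older estimates in view of the polynomial bounds provided by \eqref{IFD} and \eqref{DB}.
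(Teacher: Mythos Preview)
Your argument is correct, but it takes a different route from the paper's. The paper's proof is a two-line citation: using the continuous embedding $\Sigma(\mathbb{R}^N)\hookrightarrow L^A(\mathbb{R}^N)$ (Lemma~\ref{POPO}(iii)), the hypotheses place $\{u_n\}$ in the setting of a Brezis--Lieb result for the logarithmic nonlinearity already established in \cite[Lemma~2.3]{AHA1} (in the $H^1\cap L^A$ framework, with \cite{LBL} in the background), and the conclusion is read off directly. By contrast, you exploit the \emph{compact} embedding $\Sigma\hookrightarrow L^q$ coming from the harmonic confinement to upgrade a.e.\ convergence to strong convergence in $L^q$ for $q\in[2-1/N,2^\ast)$, and then reduce the identity to four elementary limits for $A$ and $B$ via mean-value and H\"older estimates. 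What this buys you is a self-contained argument that in fact proves more than the statement requires (you obtain $\int F(u_n)\to\int F(u)$ and $\int F(u_n-u)\to 0$ separately, not just their difference), at the cost of leaning heavily on the potential: your tightness step for $L^{2-1/N}$ genuinely needs the moment bound $\|xu_n\|_{L^2}\le C$, so your proof would not transfer to the pure $H^1\cap L^A$ setting where the cited lemma still applies. Conversely, the paper's approach is shorter and more portable but opaque without access to \cite{AHA1}.
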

\begin{proof}
Taking  into account that $\Sigma(\mathbb{R}^{N})\hookrightarrow L^{A}(\mathbb{R}^{N})$, the assertion follows by \cite[Lemma 2.3]{AHA1} (see also \cite{LBL}).
\end{proof}

Now we give the proof of Theorem \ref{ESSW}.

\begin{proof}[{\bf{Proof of Theorem \ref{ESSW}}}]  Let $\left\{ u_{n}\right\} \subseteq \Sigma(\mathbb{R}^{N})$ be a minimizing sequence for $d(\omega)$, namely, $I_{\omega}(u_{n})=0$ for all $n$, and $S_{\omega}(u_{n})\rightarrow d(\omega)$ as $n\rightarrow \infty$. Notice that sequence $\left\{ u_{n}\right\}$ is bounded in  $\Sigma(\mathbb{R}^{N})$. In fact, it is clear that the sequence $\|u_{n}\|^{2}_{L^{2}}$ is bounded. Furthermore, by virtue of the logarithmic Sobolev inequality \eqref{SII} and recalling that $I_{\omega}(u_{n})=0$, we obtain
\begin{equation*}
\left(1-\frac{\alpha^{2}}{\pi}\right)\left\|\nabla u_{n}\right\|^{2}_{L^{2}}+{\gamma(\gamma-1)}\|xu_{n}\|_{L^{2}}^{2}\leq \mbox{Log}\left[\left(\frac{e^{-\left(\omega+N\right)}}{\alpha^{N}}\right)\left\|u_{n}\right\|^{2}_{L^{2}}\right]\left\|u_{n}\right\|^{2}_{L^{2}}.
\end{equation*}
Now by taking sufficiently small positive $\alpha>0$ enables us to conclude that all minimizing sequences are bounded in $\Sigma(\mathbb{R}^{N})$. This implies that there exists some function $\varphi \in \Sigma(\mathbb{R}^{N})$ such that, up to a subsequence, $u_{n}\rightharpoonup \varphi$ weakly in $\Sigma(\mathbb{R}^{N})$ and this implies, by virtue of Lemma \ref{POPO}(i) that as $n$ goes to $+\infty$, $u_{n}\rightarrow\varphi$ in $L^{q}(\mathbb{R}^{N})$ for $2\leq q < 2^{\ast}$. In particular, we get $\|\varphi\|^{2}_{L^{2}}=2d(\omega)$. 

Now, let us prove that $I_{\omega}(\varphi)=0$.   Assume by contradiction that $I_{\omega}(\varphi)<0$. Notice that by simple  computations, we can see that there is $0<\lambda<1$ such that $I_{\omega}(\lambda \varphi)=0$. In view of definition of $d(\omega)$, we get
\begin{equation*}
d(\omega)\leq S_{\omega}(\lambda\varphi)=\frac{1}{2}\left\|\lambda \varphi\right\|^{2}_{L^{2}}<\frac{1}{2}\left\|\varphi\right\|^{2}_{L^{2}}=
d(\omega),
\end{equation*}
a contradiction. On the other hand, assume that $I_{\omega}(\varphi)>0$. Since  $u_{n}\rightharpoonup \varphi$  in $\Sigma(\mathbb{R}^{N})$, it follows that
\begin{align}
&\left\|\nabla u_{n}\right\|^{2}_{L^{2}}-\left\|\nabla u_{n}-\nabla\varphi\right\|^{2}_{L^{2}}-\left\|\nabla \varphi \right\|^{2}_{L^{2}}\rightarrow0 \label{2C11}\\
& \left\|xu_{n}\right\|^{2}_{L^{2}}-\left\|xu_{n}-x\varphi\right\|^{2}_{L^{2}}-\left\|x\varphi\right\|^{2}_{L^{2}}\rightarrow0,  \
\label{2C12} 
\end{align}
as $n\rightarrow\infty$. By combining \eqref{2C11} with \eqref{2C12} and Lemma \ref{L4} leads to 
\begin{equation*}
\lim_{n\rightarrow \infty}I_{\omega}(u_{n}-\varphi)=\lim_{n\rightarrow \infty}I_{\omega}(u_{n})-I_{\omega}(\varphi)=-I_{\omega}(\varphi),
\end{equation*}
which combined with  $I_{\omega}(\varphi)> 0$ implies  that $I_{\omega}(u_{n}-\varphi)<0$ for sufficiently large $n$. Then, by arguing as above,  we can prove that
\begin{equation*}
d(\omega)\leq\frac{1}{2} \lim_{n\rightarrow \infty}\left\|u_{n}-\varphi\right\|^{2}_{L^{2}}=d(\omega)-\frac{1}{2}\left\|\varphi\right\|^{2}_{L^{2}},
\end{equation*}
which is a contradiction. We get $I_{\omega}(\varphi)=0$, and this implies, by virtue of the definition of $d(\omega)$, that $\varphi\in \mathcal{G}_{\omega}$.

Next we prove that $u_{n}\rightarrow\varphi$ in $\Sigma(\mathbb{R}^{N})$. Notice that, on one hand, we have $u_{n}\rightarrow \varphi$  in $L^{2}(\mathbb{R}^{N})$. On the other hand, since the sequence $\left\{ u_{n}\right\}$ is bounded in $\Sigma(\mathbb{R}^{N})$, it follows by \eqref{DB} that
\begin{equation*}
 \lim_{n\rightarrow \infty}\int_{\mathbb{R}^{N}}B\left(\left|u_{n}(x)\right|\right)dx=\int_{\mathbb{R}^{N}}B\left(\left|\varphi(x)\right|\right)dx,\end{equation*}
which combined with $I_{\omega}(u_{n})=I_{\omega}(\varphi)=0$  for any $n\in \mathbb{N}$,  gives
\begin{multline}\label{2BX1}
\lim_{n\rightarrow \infty}\left[\|\nabla u_{n}\|^{2}_{L^{2}}+ \gamma(\gamma-1)\|x u_{n}\|^{2}_{L^{2}} +\int_{\mathbb{R}^{N}}A\left(\left|u_{n}\right|\right)dx\right]\\
=\|\nabla \varphi\|^{2}_{L^{2}}+ \gamma(\gamma-1)\|x \varphi\|^{2}_{L^{2}} +\int_{\mathbb{R}^{N}}A\left(\left|\varphi\right|\right)dx,
\end{multline}
and this implies, by virtue of \eqref{2BX1},  the weak lower semicontinuity  and Fatou's Lemma, that (see e.g. \cite[Lemma 12 in chapter V]{AH})
\begin{align}
& \lim_{n\rightarrow \infty}\|\nabla u_{n}\|^{2}_{L^{2}}=\|\nabla \varphi\|^{2}_{L^{2}}, \quad  \lim_{n\rightarrow \infty}\|x u_{n}\|^{2}_{L^{2}}=\|x \varphi\|^{2}_{L^{2}}.  \label{N1}
 \end{align}
Therefore, it follows from  \eqref{N1} that $u_{n}\rightarrow\varphi$  in $\Sigma(\mathbb{R}^{N})$.  This proves the first part of the statement of Theorem \ref{ESSW}.

Now we claim that $|\varphi|\in \mathcal{G}_{\omega}$ and $|\varphi|$ is necessarily  radially symmetric. Indeed, denoting by $\varphi^{\ast}$ the Schwarz symmetrization of $|\varphi|$, since $A, B \in C^{1}([0,+\infty))$ are increasing functions with $A(0)=B(0)=0$, it is follows from Layer cake representation \cite[Theorem 1.13]{ELL} and \eqref{IFD} that
\begin{equation*}
\int_{\mathbb{R}^{N}}\left|\varphi^{\ast}(x)\right|^{2}\mathrm{Log}\left|\varphi^{\ast}(x)\right|^{2}dx=\int_{\mathbb{R}^{N}}\left|\varphi(x)\right|^{2}\mathrm{Log}\left|\varphi(x)\right|^{2}dx.
\end{equation*}
Moreover, as it is readily checked,
\begin{equation*}
\int_{\mathbb{R}^{N}}|x|^{2}\left|\varphi^{\ast}(x)\right|^{2}dx<\int_{\mathbb{R}^{N}}|x|^{2}\left|\varphi^{}(x)\right|^{2}dx\quad \text{unless} \quad |\varphi|=\varphi^{\ast}\,\, a.e.
\end{equation*}
Thus, since we have that $\|\nabla \varphi^{\ast}\|^{2}_{L^{2}}\leq \|\nabla |\varphi|\|^{2}_{L^{2}}\leq\|\nabla \varphi\|^{2}_{L^{2}}$ and $\| \varphi^{\ast}\|^{2}_{L^{2}}= \| \varphi\|^{2}_{L^{2}}$, it follows that if $|\varphi|\neq \varphi^{\ast}$, then $I_{\omega}(\varphi^{\ast})<I_{\omega}(\varphi)=0$ with $\| \varphi^{\ast}\|^{2}_{L^{2}}= \| \varphi\|^{2}_{L^{2}}$, which is a contradiction because  $\varphi\in \mathcal{G}_{\omega}$. This contradiction finishes the proof of claim.

By virtue of Lemma \ref{L2} it follows that $\left\{e^{i\theta}\phi_{\omega}: \, \theta\in\mathbb{R}\right\}\subseteq \mathcal{G}_{\omega}$.  Next let us consider $\varphi\in \mathcal{G}_{\omega}$. Taking into account the definition of $d(\omega)$, $\left\|\varphi\right\|^{2}_{L^{2}}=2d(\omega)$ and  $I_{\omega}(\varphi)=0$. We claim that the function $\varphi$ satisfies the equality  in \eqref{SII} with $\alpha^{2}=\pi/\gamma$. Let us assume the contrary, i.e. suppose that we have the strict inequality in \eqref{SII} with $\alpha^{2}=\pi/\gamma$. 
Since $\varphi$ satisfies $I_{\omega}(\varphi)=0$, a direct computation yields  $\left\|\varphi\right\|^{2}_{L^{2}}>2d(\omega)$ (see proof of Lemma \ref{L2}), a contradiction. Thus,  in light of Lemma \ref{L1} we have  that there exist $r>0$, $y\in \mathbb{R}^{N}$ and $\theta_{0}\in \mathbb{R}^{}$  such that 
\begin{equation*}
\varphi(x)=r\, e^{i\theta_{0}}e^{-\frac{\gamma}{2} \left|x-y\right|^{2}}, \quad  x\in \mathbb{R}^{N}.
\end{equation*}
Since $|\varphi|$ is radial and $\left\|\varphi\right\|^{2}_{L^{2}}=2d(\omega)$, we conclude that $y=0$ and $r^{2}=e^{\omega +\gamma N}$. Hence,  $\varphi(x)= e^{i\theta_{0}}\,\phi_{\omega}(x)$ and the Theorem \ref{ESSW} is proved.
\end{proof}

\section{Stability of standing waves}
\label{S:4}

\noindent
\begin{proof}[{\bf{Proof of Theorem \ref{2ESSW}}}] 
We argue by contradiction. Suppose that $\phi_{\omega}$ is not stable in $\Sigma(\mathbb{R}^{N})$ under flow associated with problem \eqref{0NL}. Then there exist $\epsilon>0$, a sequence of initial data $(u_{n,0})_{n\in \mathbb{N}}$ in $\Sigma(\mathbb{R}^{N})$ such that for all $n\geq1$,
\begin{equation}\label{T21}
\left\|u_{n,0}-\phi_{\omega}\right\|_{\Sigma}<\frac{1}{n},
\end{equation}
and a sequence $(t_{n})_{n\in \mathbb{N}}$ such that
\begin{equation}\label{3C2}
\inf_{\theta\in\mathbb{R}} \|u_{n}(t_{n})-e^{i\theta}\phi_{\omega}\|_{\Sigma}\geq{\epsilon}, \quad \text{for any $n\in \mathbb{N}$,}
\end{equation}
where $u_{n}$ denotes the unique solution of problem \eqref{0NL} with initial data $u_{n,0}$. Now, setting $v_{n}(x)= u_{n}(x,t_{n})$ it follows  by \eqref{T21} and conservation laws
\begin{gather}
\left\|v_{n}\right\|^{2}_{L^{2}}=\left\|u_{n}(t_{n})\right\|^{2}_{L^{2}}=\left\|u_{n,0}\right\|^{2}_{L^{2}}\rightarrow \left\|\phi_{\omega}\right\|^{2}_{L^{2}}\label{CE1} \\
E(v_{n})=E(u_{n}(t_{n}))=E(u_{n,0})\rightarrow E(\phi_{\omega}).
\label{CE2}
\end{gather}
Consequently, by virtue of \eqref{CE1} and \eqref{CE2}, 
\begin{equation}\label{A12}
S_{\omega}(v_{n})\rightarrow S_{\omega}(\phi_{\omega})=d(\omega).
\end{equation}
Thus,  \eqref{CE1} together with \eqref{A12} implies that  $I_{\omega}(v_{n})\rightarrow 0$ as $n$  goes to $+\infty$. Next, 
let us set  $f_{n}(x)=\rho_{n}v_{n}(x)$ with
\begin{equation*}
\rho_{n}=\exp\left(\frac{I_{\omega}(v_{n})}{2\|v_{n}\|^{2}_{L^{2}}}\right),
\end{equation*}
where $\exp(x)$ represent the exponential function. We know  that $\rho_{n}\rightarrow 1$ as $n$  goes to $+\infty$, and $I_{\omega}(f_{n})=0$ for any $n\in\mathbb{N}$.  Since  $\left\{v_{n}\right\}$  is bounded in $\Sigma(\mathbb{R}^{N})$, it follows immediately that $\|v_{n}-f_{n}\|_{\Sigma}\rightarrow 0$. By virtue of  \eqref{A12}, we see that $\left\{f_{n}\right\}$ is a minimizing sequence for $d(\omega)$. Thanks to Theorem \ref{ESSW} we know that, up to a subsequence, there exists $\theta_{0}\in \mathbb{R}$  such that
\begin{equation}\label{UEP1}
\| f_{n}- e^{i\theta_{0}}\phi_{\omega}\|_{\Sigma}\rightarrow 0, \quad \text{as $n\rightarrow +\infty$}.
\end{equation}
Thus,  in view of the triangular inequality, \eqref{UEP1} and remembering that $v_{n}= u_{n}(t_{n})$, one can easily proves that
\begin{equation*}
\left\|u_{n}(t_{n})-e^{i\theta_{0}}\phi_{\omega}\right\|_{\Sigma}\rightarrow 0\quad \text{as $n\rightarrow +\infty$},
\end{equation*}
which is a contradiction with \eqref{3C2}. This completes the proof of the orbital stability of the ground states of \eqref{0NL}. 
\end{proof}

\bigskip
\medskip

\bigskip

\end{document}